\newtheorem{theorem}{Theorem}
\newtheorem{corollary}[theorem]{Corollary}
\newtheorem{lemma}[theorem]{Lemma}
\newtheorem*{remark}{Remark}
\newtheorem{proposition}[theorem]{Proposition}
\newcommand{\N}{\mathbb{N}}
\newcommand{\R}{\mathbb{R}}
\newcommand{\C}{\mathbb{C}}
\newcommand{\EE}{\mathbb{E}}
\newcommand{\PP}{\mathbb{P}}
\newcommand{\di}{\,\mbox{d}}
\DeclareMathOperator{\lh}{span}
\def\1{{\mathchoice {1\mskip-4mu\mathrm l}   
{1\mskip-4mu\mathrm l}
{1\mskip-4.5mu\mathrm l} {1\mskip-5mu\mathrm l}}}
\def\hh{\widehat{h}}
\def\dd{{\rm d}}
\begin{document}
\title{Persistence exponents via perturbation theory: MA(1)-processes}
\author{\renewcommand{\thefootnote}{\arabic{footnote}}Frank Aurzada\footnotemark[1]\and \renewcommand{\thefootnote}{\arabic{footnote}}Dieter Bothe\footnotemark[1]\and \renewcommand{\thefootnote}{\arabic{footnote}}Pierre-\'Etienne Druet\footnotemark[1]\and \renewcommand{\thefootnote}{\arabic{footnote}}Marvin Kettner\footnotemark[1]\and \renewcommand{\thefootnote}{\arabic{footnote}}Christophe Profeta\footnotemark[2]}
\footnotetext[1]{Technical University of Darmstadt, Schlossgartenstra\ss e 7, 64287 Darmstadt, Germany. E-mail: aurzada@mathematik.tu-darmstadt.de, bothe@mma.tu-darmstadt.de, druet@mma.tu-darmstadt.de, marvinkettner@web.de}
\footnotetext[2]{Universit\'e Paris-Saclay, CNRS, Univ Evry, Laboratoire de Math\'ematiques et Mod\'elisation d'Evry, 91037, Evry-Courcouronnes, France. E-mail: christophe.profeta@univ-evry.fr}

\maketitle

\begin{abstract}
For the moving average process $X_n=\rho \xi_{n-1}+\xi_n$, $n\in\N$, where $\rho\in\R$ and $(\xi_i)_{i\ge -1}$ is an i.i.d.\ sequence of normally distributed random variables,\ we study the persistence probabilities $\PP(X_0\ge 0,\dots, X_N\ge 0)$, for $N\to\infty$. We exploit that the exponential decay rate $\lambda_\rho$ of that quantity, called the persistence exponent, is given by the leading eigenvalue of a concrete integral operator. This makes it possible to study the problem with purely functional analytic methods. In particular, using methods from perturbation theory, we show that the persistence exponent $\lambda_\rho$ can be expressed as a power series in $\rho$. Finally, we consider the persistence problem for the Slepian process, transform it into the moving average setup, and show that our perturbation results are applicable.
\end{abstract}

\noindent \emph{Keywords}: moving average process; eigenvalue problem; integral equation; persistence; perturbation theory; Slepian process

\medskip
\noindent \emph{2020 Mathematics Subject Classification}: Primary 45C05; Secondary 47A55, 33C45, 60J05, 60G15


\section{Introduction}\label{sec: introduction}
\subsection{Persistence probabilities for moving average processes}
Let $(\xi_i)_{i\ge -1}$ be a sequence of i.i.d.\ random variables and $\rho\in \R$. A moving average process of order one (MA(1)-process) is given by
$$
X_n := \rho \xi_{n-1} + \xi_n, \quad \text{for } n\in\N. 
$$
Throughout the paper, we consider standard normally distributed random variables $(\xi_i)_i$, i.e.\ $\xi_0$ has the density $\phi(x):= ( 2\pi)^{-1/2} \exp( -x^2/2)$, $x\in\R$.

We are interested in the persistence probabilities
\begin{equation}
    \label{eqn:persistence}
\PP(X_0\ge 0,\dots,X_N \ge 0), \qquad\text{for } N\to\infty,
\end{equation}
and in particular in the exponential rate of decay of this quantity, which is called persistence exponent. Non-exit probabilities are a classic and fundamental topic in probability with numerous applications in finance, insurance, queueing, and other subjects. The question is also studied intensively in theoretical physics; there, the rationale is that the persistence exponent is a measure of how fast the underlying physical system returns to equilibrium. We refer to the survey \cite{bray} and the monograph \cite{metzler} for an overview on the relevance of the question to physical systems and to \cite{aurzadasimon} for a survey of the mathematical literature.

The persistence problem for moving average processes was studied before in \cite{MajBra,krishnapur,AurMukZei}. For MA-processes, the persistence question can be rewritten as a non-exit problem for a two-dimensional Markov chain (see e.g.\ \cite[Section~2.1]{AurMukZei}). It is well-known that non-exit probabilities of Markov chains have close relations to eigenvalues of operators (see \cite{AurBau,AurMukZei,HinKolWac}, also see \cite{ChaVil,Collet,Meleard,Tweedie2,Tweedie1} for the quasi-stationary approach). The purpose of this paper is establish and to use the connection between the persistence exponent $\lambda_\rho$, i.e.\ the exponential decay rate of (\ref{eqn:persistence}), and the leading eigenvalue of a suitable operator. Then, powerful tools from functional analysis can be applied. In particular, methods of perturbation theory in the spirit of \cite{Kato} can be used to obtain a series representation of the eigenvalue in the parameter $\rho$. This ansatz was previously used in \cite{AurKet} for autoregressive processes.

\subsection{The eigenvalue problem}
In the recent work \cite[Section 2.1]{AurMukZei} it is shown that for $\rho\neq -1$ the persistence probability (\ref{eqn:persistence}) of the MA(1)-process  decays exponentially fast and that the exponential decay rate, i.e.\ the persistence exponent, is the leading eigenvalue of the following explicit integral operator:
\begin{equation} \label{eqn:mstartt}
    S_\rho\colon \mathcal{B}(\R)\to \mathcal{B}(\R), \quad S_\rho f(x):= \int_{-\rho x}^\infty f(y)\phi(y)\di y,
\end{equation}    
where $\mathcal{B}(\R)$ is the space of bounded measurable real-valued functions on $\R$. Bearing this connection in mind, the purpose of this paper is to study the largest solution $\lambda=\lambda_\rho$ of the eigenvalue equation
\begin{equation}   \label{eqn:eigenvalueequation}
\lambda f(x) = \int_{-\rho x}^\infty f(y)\phi(y)\di y,\qquad x\in\R, f \in \mathcal{B}(\R).
\end{equation}

The approach taken in this paper is to show that a modification of the integral operator $S_\rho$ can be represented as a power series in $\rho$ if we consider the operator on a suitable space of functions. The definition of this function space (see Section~\ref{sec: main results}) is motivated by the following observation:

Assume that $f$ is analytic, that is $f(x)=\sum_{n=0}^\infty \frac{f^{(n)}(0)}{n!} x^n$, for $x\in\R$. Further, if we assume that $\lim_{x\to\infty}(f\phi)^{(n-1)}(x) =0$ for all $n\ge 1$, we can write $(f\phi)^{(n-1)}(0)= (-1)\int_0^\infty (f\phi)^{(n)}(y)\di y$. Then,

\begin{align}
S_\rho f(x) &=\int_{-\rho x}^\infty f(y)\phi(y)\di y  \nonumber\\
&= \int_0^\infty f(y)\phi(y)\di y + (-1) \int_0^{-\rho x} f(y) \phi(y)\di y \nonumber \\
&=  \int_0^\infty f(y) \phi(y) \di y + \sum_{n=1}^\infty \rho^n (-1)^{n+1} x^n \frac{(f\phi)^{(n-1)}(0)}{n!} \nonumber \\
&= \int_0^\infty f(y) \phi(y) \di y + \sum_{n=1}^\infty \rho^n (-1)^{n} x^n \frac{1}{n!}\int_0^\infty (f\phi)^{(n)}(y)\di y \nonumber\\
&= \sum_{n=0}^\infty \rho^n (-1)^{n} x^n \frac{1}{n!} \int_0^\infty (f\phi)^{(n)}(y)\di y.  \label{eqn: repr. T_rhof(x)}
\end{align}
Hence, under the above conditions on $f$, the expression $S_\rho f(x)$ can be written as a power series in $\rho$.

With this observation in mind, we will consider a specific space of analytic functions such that we obtain a well-defined holomorphic operator (cf.\ Theorem~\ref{thm: operator holomorphic MA}). From this, by using perturbation techniques in the spirit of \cite{Kato}, we can conclude that the leading eigenvalue and the corresponding eigenfunction are holomorphic in $\rho$, too. This is the contents of our main result, Theorem~\ref{thm: connection to persistence problem MA}. In other words, the persistence exponent and the eigenfunction admit a power series representation in $\rho$, respectively.
Additionally, we have iterative formulas for the coefficients of the power series representation of the persistence exponent and the eigenfunction, respectively; and we compute the first coefficients, cf.\ Theorem~\ref{thm: iterative formula MA}.

\subsection{Application to the Slepian process}
As a further application, we look at the persistence problem for the so-called Slepian process. Let $(B_t)_{t\ge 0}$ be a standard Brownian motion and define the Slepian process by
\begin{align}
S_t := B_{t+1}-B_t, \qquad t\geq 0. \label{eqn:slepstart}
\end{align}
In his seminal paper \cite{Slepian}, D.\ Slepian computed the distribution of the supremum of $S$ on $[0,1]$ and found:
\begin{equation}\label{eq: Slepian01}
\PP\left(\sup_{u\in[0,1]}S_u\leq a\right)=  \Phi(a)^2 - \phi(a)\left(a\Phi(a)+\phi(a)\right),
\end{equation}
where again $\phi$ is the standard normal density and $\Phi$ is the corresponding cumulative distribution function.
The general formula for the distribution of $\sup_{u\in[0,t]} S_u$ for any $t\geq 0$ was then obtained by Shepp in \cite{Shepp}. Shepp leaves it as an open question to study of the asymptotics  
\begin{align}\label{eq: asymptotics Slepian process}
\lim_{N\to\infty} \frac{1}{N} \log \PP\left(\sup_{u\in[0,N]}S_u \leq a \right),
\end{align}
because his formulas, which involve iterated integrals, are not well-suited for such computations. The existence of this limit was then obtained by Li \& Shao in \cite{LiSh}, and numerical computations have been proposed recently by Noonan \& Zhigljavsky \cite{NoZh}. 
The contribution of this paper is the observation that one can rewrite the persistence problem for the Slepian process as a persistence problem for an MA(1)-process. Further, we can show that our perturbation results are applicable.

\medskip
The outline of this paper is as follows. In Section~\ref{sec: main results}, our main results are stated. In particular, we show that the leading eigenvalue $\lambda_\rho$ of (\ref{eqn:eigenvalueequation}) can be expanded into a power series in $\rho$ and we discuss the coefficients of this power series. In Section~\ref{sec: Slepian}, we consider the persistence problem for the Slepian process, transform it into the MA setup, and show that our main results are applicable.
Section~\ref{sec: proofs} is devoted to the proofs of the main theorems. Finally, in Section~\ref{sec:radius} we deal with the radius of convergence of the series for $\lambda_\rho$.

%

\section{Main results}\label{sec: main results}

Let $\gamma$ be the standard Gaussian measure on $\R$ and let $h_n$ denote the $n$-th Hermite polynomial given by 
\[h_n(x):= (-1)^n e^{\frac{x^2}{2}}\frac{\di^n}{\di x^n} e^{-\frac{x^2}{2}}.\]
For the facts on Hermite polynomials that we use in this paper, we refer the reader to \cite[Chapter 6]{Andrews}.
Further we set $\hh_n(x):= (n!)^{-1/2} h_n(x)$. Here, the normalization is chosen such that $\|\hh_n\|_{L^2(\R,\gamma)}=1$.

Fix $0<q<1$ and let $(a_n)_{n\in\N}\subseteq \C$ be a sequence such that $\sum_{n=0}^\infty |a_n|^2 q^{-n}<\infty$.
By \cite[Theorem 2]{Ali}, it holds that $\sum_{n=0}^\infty |a_n \hh_n (x)|$ converges uniformly on compact subsets of $\R$. Hence, we can define an analytic function $f\colon\R\to\C$ via $f(x):= \sum_{n=0}^\infty a_n \hh_n(x)$. In particular, $\Re(f)$ and $\Im(f)$ are analytic, where $\Re(f)$ and $\Im(f)$ are the real and the imaginary part of $f$, respectively. 
Let
\[\mathcal{H}_q := \Big\{f\colon\R\to\C,\ f(x)=\sum_{n=0}^\infty a_n \hh_n(x) \text{ with } \sum_{n=0}^\infty |a_n|^2 q^{-n} <\infty \Big\}\]
and set 
\[\langle f, g\rangle_{\mathcal{H}_q} := \sum_{n=0}^\infty a_n\overline{b_n} q^{-n} \text{  for }  f=\sum_{n=0}^\infty a_n \hh_n,\ g=\sum_{n=0}^\infty b_n \hh_n.\]
We note that $(\mathcal{H}_q,\langle \cdot,\cdot\rangle_{\mathcal{H}_q})$ is a Hilbert space of functions \cite[Proposition 1]{Ali}. In fact, we will see that it is a reproducing kernel Hilbert space and we will exploit this structure for the proofs. Note that we consider a complex Hilbert space instead of a real one, since a complex space is necessary for applying the powerful methods of perturbation theory in the spirit of \cite{Kato}.

We set
\[
T_\rho\colon \mathcal{H}_q\to \mathcal{H}_q, \quad T_\rho f(x):=\int_{-\rho x}^\infty f(y)\phi(y)\di y.
\]
Note that this is the version of the operator $S_\rho$ on the space $\mathcal{H}_q$, cf.\ (\ref{eqn:mstartt}), in the sense that $S_\rho$ acts on bounded real valued functions, while $T_\rho$ acts on complex valued functions in $\mathcal{H}_q$. Here, we set, for a complex valued function $f$: $\int_{-\rho x}^\infty f(y)\phi(y)\di y = \int_{-\rho x}^\infty \Re(f)(y)\phi(y)\di y + i\int_{-\rho x}^\infty \Im(f)(y)\phi(y)\di y$. 

Our first main result states that $T_\rho$ is a holomorphic operator.

\begin{theorem}\label{thm: operator holomorphic MA}
Fix $0<q<1$. Let $-\sqrt{\frac{1-q}{1+q^{-1}}}< \rho < \sqrt{\frac{1-q}{1+q^{-1}}}$ and define for $n\in\N$ the integral operator
\[T^{(n)}\colon \mathcal{H}_q \to \mathcal{H}_q, \quad T^{(n)}f(x) := (-1)^n x^n \frac{1}{n!}\int_0^\infty (f\phi)^{(n)}(y)\di y.\]
The operator $T_\rho$ is well-defined, bounded, compact and admits the representation
\begin{equation}
    \label{eqn:operatorisholomorphic}
T_\rho = \sum_{n=0}^\infty \rho^n T^{(n)}.
\end{equation}
\end{theorem}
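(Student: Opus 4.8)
The plan is to exploit the \emph{finite-rank} structure of the operators $T^{(n)}$. Since the integral $\int_0^\infty (f\phi)^{(n)}(y)\di y$ is a scalar depending linearly on $f$, we may write $T^{(n)}f = \ell_n(f)\, p_n$, where $p_n(x):=x^n$ and $\ell_n(f):=(-1)^n\frac1{n!}\int_0^\infty (f\phi)^{(n)}(y)\di y$. The monomial $p_n$ is a polynomial, hence lies in $\mathcal{H}_q$ (it has a finite Hermite expansion), so each $T^{(n)}$ is a rank-one --- in particular compact --- operator with $\|T^{(n)}\|=\|\ell_n\|_{\mathcal{H}_q^*}\,\|p_n\|_{\mathcal{H}_q}$. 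The strategy is then: (i) estimate $\|T^{(n)}\|$; (ii) show $\sum_n |\rho|^n\|T^{(n)}\|<\infty$ in the stated range of $\rho$, so that $\sum_n \rho^n T^{(n)}$ converges in operator norm and thus defines a bounded operator that is compact as a norm limit of finite-rank operators; and (iii) identify this limit with $T_\rho$, which simultaneously shows $T_\rho$ is well defined on $\mathcal{H}_q$.

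For step (i) I would compute both factors on the Hermite basis. Using the defining identity $h_m\phi = (-1)^m\phi^{(m)}$ one gets $\hh_m\phi=(m!)^{-1/2}(-1)^m\phi^{(m)}$, hence $(\hh_m\phi)^{(n)}=(m!)^{-1/2}(-1)^m\phi^{(m+n)}$; since $\phi$ and all its derivatives vanish at $\pm\infty$, the integral telescopes to a boundary value at $0$, giving the explicit formula $\ell_n(\hh_m)=\frac{\phi(0)}{n!\sqrt{m!}}\,h_{m+n-1}(0)$ (with $h_k(0)$ nonzero only for even $k$). Representing $\ell_n$ through its Riesz vector and using orthogonality in $\mathcal{H}_q$ then yields $\|\ell_n\|_{\mathcal{H}_q^*}^2=\sum_{m\ge0}|\ell_n(\hh_m)|^2 q^{m}$. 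The second factor is obtained from the expansion of $x^n$ in Hermite polynomials, whose leading term gives $\langle p_n,\hh_n\rangle$ and hence $\|p_n\|_{\mathcal{H}_q}^2=\sum_k |\,\text{coeff}\,|^2 q^{-k}$, with dominant contribution $n!\,q^{-n}$.

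Step (ii) is the main obstacle. One must insert the explicit values $h_{2j}(0)=(-1)^j (2j)!/(2^j j!)$ into the two sums, control the resulting factorial/combinatorial expressions asymptotically in $n$ (the sum defining $\|\ell_n\|^2$ runs over the single arithmetic progression $m+n-1=2j$ and needs a careful estimate), and track all constants so as to produce precisely the geometric growth $\|T^{(n)}\|^{1/n}\to \big(\tfrac{1-q}{1+q^{-1}}\big)^{-1/2}$. It is exactly this bookkeeping that pins down the stated radius $|\rho|<\sqrt{(1-q)/(1+q^{-1})}$ as the region where $\sum_n|\rho|^n\|T^{(n)}\|$ converges; I expect the delicate point to be matching the exponential rate of the $\ell_n$-sum against that of the $p_n$-norm so that their product yields the claimed constant rather than a cruder bound.

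Finally, for step (iii) I would justify the formal computation (\ref{eqn: repr. T_rhof(x)}) for every $f\in\mathcal{H}_q$. The reproducing kernel $K(x,y)=\sum_n \hh_n(x)\hh_n(y)q^n$ (summable by Mehler's formula) gives the pointwise bound $|f(x)|\le \|f\|_{\mathcal{H}_q}\sqrt{K(x,x)}$, and analogous bounds for derivatives via $\partial^k K$; since $K(x,x)$ grows like $\exp(\tfrac{q}{1+q}x^2)$ with $\tfrac{q}{1+q}<\tfrac12$, the products $(f\phi)^{(k)}$ decay at $\pm\infty$, so the boundary terms in (\ref{eqn: repr. T_rhof(x)}) vanish and the Taylor expansion of $x\mapsto\int_0^{-\rho x}f\phi$ is valid. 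This establishes $T_\rho f(x)=\sum_n\rho^n T^{(n)}f(x)$ pointwise. As evaluation functionals are continuous on the RKHS $\mathcal{H}_q$, the operator-norm limit from step (ii) also converges pointwise, so the two agree; hence $T_\rho f=\sum_n\rho^n T^{(n)}f\in\mathcal{H}_q$, which shows that $T_\rho$ is well defined, bounded, compact, and equal to $\sum_{n}\rho^n T^{(n)}$, as claimed.
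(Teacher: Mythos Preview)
Your overall strategy --- write $T^{(n)}=\ell_n(\cdot)\,p_n$ as a rank-one operator, bound $\|T^{(n)}\|=\|\ell_n\|_{\mathcal{H}_q^\ast}\|p_n\|_{\mathcal{H}_q}$, sum in operator norm, and deduce compactness as a norm limit of finite-rank operators --- is exactly the paper's. Your step~(iii) is also essentially the paper's Lemma~\ref{lemma: properties of funtions in H_q}(b), proved there by an $L^1$-argument rather than RKHS pointwise bounds; either route justifies~\eqref{eqn: repr. T_rhof(x)} for all $f\in\mathcal{H}_q$.

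The real difference, and the gap in your proposal, is step~(ii). You propose to compute $\|\ell_n\|_{\mathcal{H}_q^\ast}$ via the Riesz vector and the explicit values $h_k(0)$, but then leave the resulting factorial sum undone, saying only that you ``expect'' the rates to match. The paper avoids this combinatorics entirely by first proving the Sobolev-type identity
\[
\|f\|_{\mathcal{H}_q}^2=\sum_{k\ge 0}\frac{(q^{-1}-1)^k}{k!}\,\|f^{(k)}\|_{L^2(\R,\gamma)}^2
\]
(Lemma~\ref{lemma: properties of funtions in H_q}(d)). With this in hand one expands $(f\phi)^{(n)}$ by Leibniz, uses $\phi^{(j)}=(-1)^j h_j\phi$, applies H\"older in $L^2(\R,\gamma)$, and then Cauchy--Schwarz against the weights $(q^{-1}-1)^k/k!$ to get in a few lines
\[
\Bigl|\tfrac{1}{n!}\int_0^\infty (f\phi)^{(n)}(y)\di y\Bigr|\le\|f\|_{\mathcal{H}_q}\,\frac{1}{\sqrt{n!}}\Bigl(\frac{1}{1-q}\Bigr)^{n/2}.
\]
Combined with $\|p_n\|_{\mathcal{H}_q}\le\sqrt{n!}\,(1+q^{-1})^{n/2}$ (from the inverse Hermite expansion of $x^n$ together with $(2j)!\le 4^j(j!)^2$), the $\sqrt{n!}$ factors cancel and one reads off $\|T^{(n)}\|\le\bigl((1+q^{-1})/(1-q)\bigr)^{n/2}$ directly, which is precisely the stated radius. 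So the ``delicate bookkeeping'' you anticipate never arises; the derivative-norm representation of $\|\cdot\|_{\mathcal{H}_q}$ is the key lemma your argument is missing.
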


\begin{remark}
To optimize the radius of convergence of the power series for $T_\rho$, the best choice of $0<q<1$ for the Hilbert space $\mathcal{H}_q$ is $q^*:= \sqrt{2}-1$. Then, $T_\rho$ can be represented as a power series for $-(\sqrt{2}-1) < \rho< \sqrt{2}-1$.
\end{remark}
Let
\[r(T_\rho):= \sup\{|\lambda| \colon \lambda\in \Sigma(T_\rho)\}\]
be the spectral radius of $T_\rho$, where $\Sigma(T_\rho)$ denotes the spectrum of $T_\rho$.

Our second main result deals with the leading eigenvalue $\lambda_\rho$ of (\ref{eqn:eigenvalueequation}) on $\mathcal{H}_q$, i.e.\ the persistence exponent for moving average processes, and states that $\lambda_\rho$ can be expanded into a power series.

\begin{theorem}\label{thm: connection to persistence problem MA}
For $-\sqrt{\frac{1-q}{1+q^{-1}}} < \rho < \sqrt{\frac{1-q}{1+q^{-1}}}$ we have
\[\PP(X_0\ge 0,\dots, X_N\ge 0) = \lambda_\rho^{N+o(N)},\]
where $\lambda_\rho := r(T_\rho)\in (0,1)$ is the largest eigenvalue of $T_\rho$. The corresponding eigenfunction $f_\rho$ is non-negative, i.e.\ $f_\rho(x)\ge 0$ for all $x\in\R$.

There are numbers $K_n\in\R$ such that the quantity $\lambda_\rho$ admits the representation
\[
\lambda_\rho = \sum_{n=0}^\infty \rho^n K_n,
\]
for all $|\rho|< r_0$, where $r_0>0.332$.
\end{theorem}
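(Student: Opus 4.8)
The plan is to treat $T_\rho=\sum_{n\ge0}\rho^n T^{(n)}$ from Theorem~\ref{thm: operator holomorphic MA} as a bounded-holomorphic family and run Kato-type analytic perturbation theory around the explicitly solvable point $\rho=0$. First I would solve the unperturbed problem. Since $T_0f(x)=\int_0^\infty f(y)\phi(y)\di y$ is independent of $x$, the operator $T_0$ has rank one: with $f=\sum_n a_n\hh_n$ one finds $T_0 f=\big(\sum_n a_n c_n\big)\hh_0$, where $c_n:=\int_0^\infty\hh_n\di\gamma$. Its unique nonzero spectral value is the simple eigenvalue $c_0=\int_0^\infty\phi\di x=\tfrac12$ with eigenfunction $\hh_0\equiv1$, and the remainder of $\Sigma(T_0)$ is $\{0\}$. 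Hence $\tfrac12$ is an isolated simple eigenvalue of $T_0$, separated from the rest of the spectrum by a gap of $\tfrac12$; this will be $K_0=\tfrac12$.

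Next I would establish that $r(T_\rho)$ is attained as an eigenvalue with a non-negative eigenfunction. The operator $T_\rho$ is positive: $f\ge0$ pointwise gives $T_\rho f(x)=\int_{-\rho x}^\infty f\phi\ge0$. The positive cone of $\mathcal{H}_q$ is total, because by Mehler's formula the reproducing kernel $k(x,y)=\sum_n\hh_n(x)\hh_n(y)q^n$ is strictly positive and its translates span a dense subspace. Since $T_\rho$ is compact by Theorem~\ref{thm: operator holomorphic MA}, the Krein--Rutman theorem then yields that $r(T_\rho)$ is an eigenvalue with a non-negative eigenfunction $f_\rho\in\mathcal{H}_q$. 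Any eigenfunction for a nonzero eigenvalue is automatically bounded, because $\mathcal{H}_q\hookrightarrow L^2(\R,\gamma)$ gives $\int|f|\phi<\infty$ and hence $f=\lambda^{-1}T_\rho f$ is bounded; thus $f_\rho\in\mathcal{B}(\R)$ solves the eigenvalue equation for $S_\rho$ as well. Together with the identification of the persistence exponent as the leading eigenvalue of $S_\rho$ in \cite{AurMukZei}, this gives $\lambda_\rho=r(T_\rho)$ and $\PP(X_0\ge0,\dots,X_N\ge0)=\lambda_\rho^{N+o(N)}$, with $\lambda_\rho\in(0,1)$ since $\lambda_\rho$ is the decay rate of probabilities that tend to $0$.

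For the power series I would invoke analytic perturbation theory \cite{Kato}. As $\tfrac12$ is a simple isolated eigenvalue of $T_0$ and $\rho\mapsto T_\rho$ is holomorphic, for small $|\rho|$ the operator $T_\rho$ has a unique eigenvalue near $\tfrac12$, holomorphic in $\rho$, with one-dimensional eigenprojection $P_\rho=\frac{1}{2\pi i}\oint_\Gamma(\zeta-T_\rho)^{-1}\di\zeta$ for a contour $\Gamma$ enclosing $\tfrac12$ and excluding $0$. Holomorphy furnishes the expansion $\lambda_\rho=\sum_{n\ge0}\rho^nK_n$, and the $K_n$ are real because $T_\rho$ preserves real-valuedness for real $\rho$. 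By upper semicontinuity of the spectrum this branch is, for small $\rho$, the only spectral value near $\tfrac12$, so it must coincide with the leading eigenvalue $r(T_\rho)$ identified above.

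The main obstacle is the explicit lower bound $r_0>0.332$ on the radius of convergence. The branch $\lambda_\rho$ stays holomorphic as long as a fixed circle $\Gamma$ of radius $\tfrac14$ about $\tfrac12$ remains in the resolvent set, which by a Neumann-series argument holds once $\sup_{\zeta\in\Gamma}\|(T_\rho-T_0)(\zeta-T_0)^{-1}\|<1$. Using the explicit Sherman--Morrison form of $(\zeta-T_0)^{-1}$ for the rank-one $T_0$, and bounding $\|T_\rho-T_0\|\le\sum_{n\ge1}|\rho|^n\|T^{(n)}\|$ with the estimates behind Theorem~\ref{thm: operator holomorphic MA}, this reduces to an explicit inequality in $|\rho|$ and $q$. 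Optimising it over $0<q<1$ (near the value $q^*=\sqrt2-1$ that maximises the operator radius) then yields $r_0>0.332$. The delicate points are sharp control of $\|T^{(n)}\|$ and of the resolvent norm on $\Gamma$: a crude gap or a crude contour would give a noticeably smaller constant, so extracting $0.332$ rather than a weaker bound is where the real work lies.
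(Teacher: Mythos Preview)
Your treatment of the unperturbed problem, Krein--Rutman, and Kato analytic perturbation for the existence of the power series is essentially the paper's argument. One small gap: to identify $r(T_\rho)$ with the persistence exponent you cite \cite{AurMukZei} for $S_\rho$ on $\mathcal{B}(\R)$ and then note $f_\rho\in\mathcal{B}(\R)$, but this only gives $r(T_\rho)\le r(S_\rho)$; it does not rule out a larger eigenvalue of $S_\rho$ whose eigenfunction fails to lie in $\mathcal{H}_q$. The paper instead proves both bounds directly on $\mathcal{H}_q$, using $\PP(X_0\ge0,\dots,X_N\ge0)=\int_S T_\rho^N(\1)\,\dd(\gamma\otimes\gamma)$ together with Gelfand's formula for the upper bound and the Krein--Rutman eigenfunction (replacing $\1$ by $g/\|g\|_\infty$) for the lower bound.

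The substantive gap is the radius estimate $r_0>0.332$. Your proposed route---Neumann series on a fixed contour, Sherman--Morrison for the rank-one resolvent, and the bounds $\|T^{(n)}\|\le((1+q^{-1})/(1-q))^{n/2}$---is valid in principle, but plugging these ingredients in yields a radius an order of magnitude below $0.332$; the paper even remarks that since $T_\rho$ is not normal the sharp perturbation bound from \cite[Corollary~6]{AurKet} is unavailable. The paper obtains $0.332$ by a completely different method developed in Section~\ref{sec:radius}: the eigenvalue equation is rewritten as $\lambda=\sum_{k\ge0}\kappa_k(\rho)\lambda^{-k}$ for explicit iterated Gaussian integrals $\kappa_k$ (Lemma~\ref{lem:differentrepresentation}), inverted via the Lagrange--B\"urmann formula to produce a closed-form series for $\lambda_\rho$, and absolute convergence of that series for $|\rho|<0.332$ is established from the bound $|\kappa_k(\rho)|\le\rho^{k(k+1)/2}/(2^{k+1}\pi^{k/2}\Gamma(k/2+1))$ (Lemma~\ref{lem:boundf_k}, Corollary~\ref{cor:radius}). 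This bypasses operator-norm estimates entirely, and your outline does not indicate any mechanism that would recover the specific constant.
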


As an application, we are going to see in Section~\ref{sec: Slepian} that one can transform the persistence problem for the Slepian process into the setup of persistence of MA(1)-processes and that this case can be covered by Theorem~\ref{thm: connection to persistence problem MA}, cf.\ Proposition~\ref{prop: Slepian}.

%

As the third and last main result, we determine the coefficients $K_n$, $n\in\N$, of the power series of the persistence exponent $\lambda_\rho$. 

By Theorem~\ref{thm: operator holomorphic MA} the operator $T_\rho$ and by Theorem~\ref{thm: connection to persistence problem MA} the eigenvalue $\lambda_\rho$ can be expressed as a power series in $\rho$, respectively. Additionally, the corresponding eigenfunction $f_\rho$ can be expressed as a power series in $\rho$ (see \cite[Theorem 4]{AurKet}). Let us write

\begin{align*}
T_\rho = \sum_{k=0}^\infty \rho^k T^{(k)},
\qquad
\lambda_\rho = \sum_{k=0}^\infty \rho^k K_k,
\qquad
f_\rho = \sum_{m=0}^\infty \rho^m g_m.
\end{align*}

Note that for $\rho=0$ the MA(1)-process is a sequence of i.i.d.\ random variables, so that \[K_0=\lambda_0=\PP(\xi_0\ge 0)=\frac{1}{2}.\]

\begin{theorem}\label{thm: iterative formula MA}
For all $m\in\N$ the function $g_m$ is a polynomial of degree at most $m$. A full iterative description of the $g_m$, $m\in\N$, is given by the equations $g_0=\1$, $K_0=\frac{1}{2}$, $g_m(0)=0$ for $m\ge 1$, and
\begin{equation}\label{eqn:iterative2}
g_m = \frac{1}{K_0} \left( \sum_{j=1}^m T^{(j)} g_{m-j} - \sum_{j=1}^{m-1} T^{(0)} g_j \cdot g_{m-j} \right), \quad m\ge 1.
\end{equation}
Further, the $K_n$, $n\ge 1$, can be computed using 
\begin{equation} \label{eqn:newreprK}
K_n = T^{(0)} g_n.
\end{equation}
The first coefficients are given by \allowdisplaybreaks
\begin{align*}
K_0 &=\frac{1}{2},\\
K_1 &= \frac{1}{\pi},\\
K_2 &= - \frac{2}{\pi^2},\\
K_3 &= - \frac{5}{6\pi} + \frac{8}{\pi^3},\\
K_4 &=  \frac{13}{3 \pi^2}  - \frac{40}{\pi^4},\\
K_5 &= \frac{23}{40\pi} - \frac{28}{\pi^3} + \frac{224}{\pi^5},\\
K_6 &= - \frac{1069}{180\pi^2} +  \frac{580}{3\pi^4}  - \frac{1344}{\pi^6},\\
K_7 &=  -\frac{37}{112 \pi} + \frac{842}{15\pi^3}- \frac{4144}{3\pi^5} + \frac{8448}{\pi^7},\\
K_8 &=  \frac{943}{168 \pi^2} - \frac{1535}{3\pi^4} + \frac{10080}{\pi^6} - \frac{54912}{\pi^8} .
\end{align*}
\end{theorem}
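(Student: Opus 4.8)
The plan is to insert the three power series into the eigenvalue equation $T_\rho f_\rho=\lambda_\rho f_\rho$ and to compare coefficients of equal powers of $\rho$. By Theorem~\ref{thm: operator holomorphic MA} we have $T_\rho=\sum_{k\ge 0}\rho^k T^{(k)}$ in operator norm, by Theorem~\ref{thm: connection to persistence problem MA} we have $\lambda_\rho=\sum_{k\ge 0}\rho^k K_k$, and the eigenfunction is holomorphic, $f_\rho=\sum_{m\ge 0}\rho^m g_m$ in $\mathcal{H}_q$. Since $f_0=\1$ satisfies $f_0(0)=1\neq 0$ and $\rho\mapsto f_\rho(0)$ is holomorphic (point evaluation is continuous on the reproducing kernel Hilbert space $\mathcal{H}_q$), I would divide by $f_\rho(0)$ to normalize the eigenfunction so that $f_\rho(0)=1$ for all small $\rho$; in terms of coefficients this reads $g_0(0)=1$ and $g_m(0)=0$ for $m\ge 1$. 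Forming the Cauchy products of the convergent series termwise and comparing the coefficient of $\rho^m$ yields the fundamental identity
\begin{equation}\label{eqn:plan-fundamental}
\sum_{j=0}^{m} T^{(j)} g_{m-j} = \sum_{j=0}^{m} K_j\, g_{m-j}, \qquad m\ge 0.
\end{equation}
For $m=0$ this is $T^{(0)} g_0=K_0 g_0$; since $T^{(0)}h=\int_0^\infty h\phi\di y$ is a constant function, $g_0$ must be constant, and the normalization forces $g_0=\1$ and $K_0=\int_0^\infty\phi\di y=\tfrac12$.

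Next I would extract the two stated formulas from \eqref{eqn:plan-fundamental}. First I evaluate the identity at $x=0$, which is legitimate since both sides are analytic and point evaluation is continuous on $\mathcal{H}_q$. For $j\ge 1$ the operator $T^{(j)}$ produces the monomial $(-1)^j x^j\frac{1}{j!}\int_0^\infty(g_{m-j}\phi)^{(j)}\di y$, which vanishes at $x=0$; and by the normalization $g_{m-j}(0)=0$ unless $j=m$. Hence \eqref{eqn:plan-fundamental} at $x=0$ collapses to $T^{(0)}g_m=K_m$, which is \eqref{eqn:newreprK}. Substituting this back, the $j=0$ term on the left of \eqref{eqn:plan-fundamental} equals the $j=m$ term on its right (both are the constant function $T^{(0)}g_m=K_m=K_m g_0$), so these cancel and, after moving the remaining $j\ge 1$ terms of the right-hand side over and replacing each $K_j$ by $T^{(0)}g_j$, I obtain
\[
K_0\, g_m=\sum_{j=1}^{m}T^{(j)}g_{m-j}-\sum_{j=1}^{m-1}\big(T^{(0)}g_j\big)\,g_{m-j}.
\]
Since $K_0=\tfrac12\neq 0$ this rearranges to the recursion \eqref{eqn:iterative2}, whose right-hand side only refers to $g_0,\dots,g_{m-1}$, so it is a valid inductive definition.

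The degree claim then follows by induction on $m$: assuming each $g_k$ ($k<m$) is a polynomial of degree at most $k$, the term $T^{(j)}g_{m-j}$ is a scalar multiple of $x^j$ (degree $j\le m$), while $T^{(0)}g_j$ is a scalar so that $(T^{(0)}g_j)g_{m-j}$ has degree at most $m-j\le m-1$; hence $g_m$ has degree at most $m$. I would also note that \eqref{eqn:iterative2} directly gives $g_m(0)=0$ for $m\ge 1$, since every summand vanishes at the origin, which shows the iterative scheme is self-contained and consistent with the normalization. Finally, the explicit values $K_1,\dots,K_8$ are obtained by iterating the recursion. Here the key simplification is that, for $n\ge 1$, integrating by parts and using the decay of $(g\phi)^{(k)}$ at infinity gives $T^{(n)}g(x)=(-1)^{n+1}\frac{x^n}{n!}(g\phi)^{(n-1)}(0)$, so every operator application reduces to evaluating a derivative of a polynomial times $\phi$ at the origin; together with $T^{(0)}g=\int_0^\infty g\phi\di y$ these are computed from the standard half-line Gaussian integrals $\int_0^\infty y^k\phi\di y$.

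The conceptual steps above are short, so the hard part will be the bookkeeping of the explicit computation: the number of terms in \eqref{eqn:iterative2} and the degree of $g_m$ both grow with $m$, and each $K_n$ arises as a sum of several Gaussian-integral contributions whose powers of $(2\pi)^{-1/2}$ must combine correctly into integer powers of $1/\pi$. Organizing this cancellation cleanly, and in particular verifying the stated coefficients $K_1,\dots,K_8$ (for which computer algebra is a natural aid), is the most laborious part of the argument.
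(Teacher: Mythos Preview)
Your proposal is correct and follows essentially the same approach as the paper: insert the three power series into the eigenvalue equation, compare coefficients of $\rho^m$ to obtain the fundamental identity \eqref{eqn:plan-fundamental}, and then extract $K_n=T^{(0)}g_n$ and the recursion for $g_m$. The only cosmetic difference is that you impose the normalization $f_\rho(0)=1$ at the outset (using the RKHS structure to justify point evaluation), whereas the paper arrives at the same condition by comparing the $x^0$-coefficients and observing that the constant term $g_m(0)$ drops out of the recursion and may therefore be set to zero.
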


It would be very interesting to obtain a closed-form expression for the coefficients $(K_n)$. We suspect that the last term of each $K_n$, respectively, is given by $\tau_n:=(-1)^{n-1} 2^{n-1}\binom{2(n-1)}{n-1} \frac{1}{n} \pi^{-n}$. The second to last term of the $K_n$, respectively, seems to be of the form $-\tau_{n-2} \frac{ 8 (n - 3) + 5}{6}$. Obtaining more terms seems complicated.

%

\section{Slepian process}\label{sec: Slepian}

In this section, we consider the persistence problem for the Slepian process, show how this can be transformed into a persistence question for an MA(1)-process, and prove that the above main results can be applied.

Recall that the Slepian process $(S_t)$ was defined in (\ref{eqn:slepstart}). Let us denote by 
$$
F_N(a):=\PP\left(\sup_{u\in[0,N]}S_u\leq a\right)
$$
the persistence probability for the Slepian process, where $a\in\R$ and $N\in\N$. 

\begin{proposition}\label{prop: Slepian}
The persistence probability $F_N(a)$ may be written as
$$
F_N(a) = \PP(X_0\leq b, \ldots, X_{N-1}\leq b)
$$
where $(X_n)_{n\geq1}$ is a MA(1)-process with standard normally distributed random
variables and with parameter 
$$\widehat{\rho} = \frac{\sqrt{1-4\cos^2(2\pi F_2(\widehat{a}))} -1}{2\cos(2\pi F_2(\widehat{a})) }\simeq 0.3186, $$
where $\widehat{a} = F_1^{-1}(1/2)$ and  $b=\Phi^{-1}(F_1(a))\sqrt{1+\widehat{\rho}^2}$. 
\end{proposition}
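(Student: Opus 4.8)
The plan is to turn the continuous persistence event into the persistence of a discrete, stationary, one-dependent sequence, and then to realise that sequence as an MA(1)-process by matching low-order marginals. First I would split $[0,N]=\bigcup_{k=0}^{N-1}[k,k+1]$ and write $F_N(a)=\PP(M_0\le a,\dots,M_{N-1}\le a)$ with $M_k:=\sup_{u\in[k,k+1]}S_u$. Since $\operatorname{Cov}(S_s,S_t)=(1-|t-s|)_+$, the block $M_k$ is measurable with respect to $B$ on $[k,k+2]$; hence $(M_k)$ is stationary and one-dependent (so $M_k$ and $M_{k+2}$ are independent), which is exactly the dependence structure of an MA(1)-process $X_n=\rho\xi_{n-1}+\xi_n$. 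The parameters $b$ and $\widehat\rho$ are then determined by forcing the one- and two-dimensional marginals of $(X_n)$ to agree with those of $(M_k)$.

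For the one-dimensional marginal I would use stationarity and Slepian's formula (\ref{eq: Slepian01}): $\PP(M_0\le a)=F_1(a)$. On the MA(1) side $X_0=\rho\xi_{-1}+\xi_0\sim N(0,1+\rho^2)$, so $\PP(X_0\le b)=\Phi(b/\sqrt{1+\rho^2})$. Equating the two and inverting gives $b=\sqrt{1+\widehat\rho^2}\,\Phi^{-1}(F_1(a))$, which is precisely the stated value of $b$; in particular the median threshold $\widehat a=F_1^{-1}(1/2)$ forces $b=0$.

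To fix $\widehat\rho$ I would match the two-dimensional marginal at the median. On one side $\PP(M_0\le\widehat a,M_1\le\widehat a)=\PP(\sup_{u\in[0,2]}S_u\le\widehat a)=F_2(\widehat a)$; on the other, $(X_0,X_1)$ is centred bivariate Gaussian with correlation $r=\widehat\rho/(1+\widehat\rho^2)$, and at $b=0$ Sheppard's orthant identity gives $\PP(X_0\le 0,X_1\le 0)=\tfrac14+\tfrac{1}{2\pi}\arcsin r$. Equating yields $r=-\cos(2\pi F_2(\widehat a))$, and solving the quadratic $r(1+\widehat\rho^2)=\widehat\rho$ for the admissible root reproduces the stated formula for $\widehat\rho$; inserting the numerical values of $\widehat a$ and $F_2(\widehat a)$ obtained from Slepian's and Shepp's formulas gives $\widehat\rho\simeq 0.3186$.

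The hard part will be upgrading this marginal matching to the identity $F_N(a)=\PP(X_0\le b,\dots,X_{N-1}\le b)$ for every $N$. Both sequences are stationary and one-dependent but neither is Markov, so the finite-dimensional persistence probabilities are not fixed by the bivariate law alone; the route I would take is to encode each persistence probability through its transfer operator — for $(X_n)$ this is precisely the operator of Section~\ref{sec: main results} in its $\le b$ form, and for $(M_k)$ a transfer operator acting on functionals of the Brownian increment over one unit interval — and to show that the two, together with the relevant initial and terminal vectors, coincide after a change of variables pushing Wiener measure forward to $N(0,1)$. Verifying this coincidence of operators is the genuine analytic core of the statement; once it is in place, the identity follows, and Theorems~\ref{thm: connection to persistence problem MA} and \ref{thm: iterative formula MA} then apply verbatim to yield a power-series expansion of the Slepian persistence exponent in $\widehat\rho$.
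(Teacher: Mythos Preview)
Your decomposition into block maxima $M_k$, the one-dependence observation, and your determination of $b$ and $\widehat\rho$ by matching the one- and two-dimensional marginals are exactly what the paper does (the paper writes the orthant probability as $\tfrac14+\tfrac1{2\pi}\arctan\!\big(s/\sqrt{1-s^2}\big)$, which is your $\tfrac14+\tfrac1{2\pi}\arcsin r$ with $s=r$). Where your proposal stalls is the passage from the bivariate match to all $N$: you correctly note that stationarity, one-dependence, and agreement of one- and two-dimensional marginals do not by themselves pin down all orthant probabilities, and you propose a transfer-operator comparison that you do not carry out.

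The paper does not attempt an operator comparison. Its device is the marginal Gaussianisation $Z_i:=\Phi^{-1}(F_1(M_i))$: each $Z_i$ is then standard normal, the sequence is stationary and one-dependent, hence has tridiagonal covariance, and the paper then \emph{treats $(Z_0,\dots,Z_{N-1})$ as a Gaussian vector}, which is exactly the law of $(\widehat\rho\,\xi_{i-1}+\xi_i)/\sqrt{1+\widehat\rho^2}$; the identity for every $N$ follows at once. So the idea you are missing, relative to the paper, is this quantile transform $Z_i=\Phi^{-1}(F_1(M_i))$ together with the identification of $(Z_i)$ with a Gaussian one-dependent sequence. Your caution is not misplaced, however: the inference ``marginals standard normal and one-dependent $\Rightarrow$ jointly Gaussian'' is precisely the point you flagged, and the paper asserts it rather than deriving it (it amounts to the copula of $(M_k,M_{k+1})$ being Gaussian). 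Your transfer-operator sketch would be one way to substantiate the full-$N$ identity, but as written it is only a plan, and it is not the route the paper takes.
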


When $b=0$, i.e.\ $a=\widehat{a}$, Theorem~\ref{thm: connection to persistence problem MA} shows that the desired exponential decay rate of the persistence probability of the Slepian process can be expressed as a power series since $\widehat{\rho}<0.332$. For arbitrary $a\in\R$ a shifted version of $T_{\widehat{\rho}}$ needs to be considered. Presumably, as in Theorem~\ref{thm: operator holomorphic MA} and Theorem~\ref{thm: connection to persistence problem MA}, a power series representation can be obtained.

\begin{remark}
The value of $F_1$ was originally computed by \cite{Slepian} and is given in (\ref{eq: Slepian01}).  Similarly, following the computation of \cite{NoZh}, the value of $F_2(\widehat{a})$ is given by
\begin{multline*}
F_2(\widehat{a})=  \Phi(  \widehat{a}) -\Phi^3( \widehat{a}) 
- \frac{1}{\sqrt{2 }}\int_0^{\infty} \Phi( \widehat{a}-y)\varphi(\sqrt{2} y) \left(\Phi(\sqrt{2}y) - \frac{1}{2}\right)\,\emph{d}y\\
+ \frac{\varphi^2( \widehat{a})}{2} \left(\Phi( \widehat{a})( \widehat{a}^2+1) +  \widehat{a}\varphi( \widehat{a})\right)
+\int_0^{\infty} \Phi^2(\widehat{a}-y) \varphi(y+ \widehat{a}) \,\emph{d} y.
\end{multline*}
\end{remark}

\begin{proof}[Proof of Proposition~\ref{prop: Slepian}]
We start by writing the decomposition:
\[
F_N(a)= \PP\left(M_0\leq a, \ldots, M_{N-1}\leq a\right),
\]

where the random variables $(M_i)_{i\geq 0}$ are defined by 
\[
M_i := \sup_{u\in[0,1]} S_{u+i}.
\]
 Using the stationarity  and the independence of the increments of Brownian motion, it is clear that the random variables $(M_i)$ are identically distributed with common distribution $F_1$, and are such that for any $i$ and $j$ with $|i-j|\geq 2$, $M_i$ and $M_j$ are independent.
Define next the centered Gaussian random variables
\[
Z_i :=  \Phi^{-1}(F_1(M_i)), \quad i\ge 0,
\]
and observe that the sequence $(Z_i)$ is stationary, with covariance matrix $V$ given by 

\[
V=\begin{pmatrix}
1 &s &0&0 & \ldots \\
s & 1 & s  & 0&  \ldots   \\
0 & s & 1 & s& \ldots\\
0 & 0 & s& 1 &  \\
\vdots & \vdots & \vdots &  &\ddots
\end{pmatrix},
 \qquad \text{where }s = \EE[Z_0Z_1].
\]

\noindent
Setting $b_0:=\Phi^{-1}(F_1(a))$, we are thus led to compute 
\begin{equation}\label{eq:FnNew}
F_N(a) = \PP\left(Z_0 \leq b_0, Z_1 \leq b_0 , \ldots,  Z_{N-1} \leq b_0\right), 
\end{equation}
where the density of the Gaussian vector $(Z_0, \ldots, Z_{N-1})$ depends only on $s$. To compute the value of $s$, observe that taking $N=2$ and $b_0=0$,  we have
\begin{align*}
F_2\left(\widehat{a}\right) &=  \PP\left(Z_0 \leq 0, Z_1 \leq 0\right)\\
&=  \frac{\sqrt{1-s^2}}{2\pi } \int_{-\infty}^0 \int_{-\infty}^0  \exp\left(-\frac{1}{2} \left(x^2+y^2-2s x y\right)\right) \di x \di y\\
&= \frac{1}{4} + \frac{1}{2\pi} \text{arctan}\left(\frac{s}{\sqrt{1-s^2}}\right).
\end{align*}


Inverting this relation yields the value of $s$ (note that $2\pi F_2(\widehat{a})\in[\frac{\pi}{2},\pi]$ so that we can use the last formula):
$$s = \frac{ \tan\left(2\pi F_2\left(\widehat{a}\right)- \frac{\pi}{2}\right)}{\sqrt{1+ \tan^2\left(2\pi F_2\left(\widehat{a}\right)- \frac{\pi}{2}\right)}}= - \cos\left(2\pi F_2\left(\widehat{a}\right)\right).$$
Finally, let $(\xi_i)_{i\geq -1}$ be an i.i.d.\  sequence of random variables with distribution $\xi_0\sim\mathcal{N}(0,1)$ and set
\[\widehat{\rho} := \frac{1-\sqrt{1-4s^2}}{2s}\qquad \text{ so that }\quad s = \frac{\widehat{\rho}}{1+\widehat{\rho}^2}.\]
Therefore, the sequence $(Z_i)$ has the same distribution as the sequence
\[
\frac{\widehat{\rho}\xi_{i-1} +\xi_i}{\sqrt{1+\widehat{\rho}^2}},\qquad i\geq 0;
\]
and going back to (\ref{eq:FnNew}), we obtain 
\begin{align*}
F_N(a) = \PP\left( \widehat{\rho}\xi_{-1} +  \xi_0 \leq b_0\sqrt{1+\widehat{\rho}^2},\ldots, \widehat{\rho}\xi_{N-2} +  \xi_{N-1} \leq b_0\sqrt{1+\widehat{\rho}^2} \right).
\end{align*}
This is exactly the statement of Proposition~\ref{prop: Slepian} after setting $X_i := \widehat{\rho}\xi_{i-1} + \xi_i $ for $i\in \N$.
\end{proof}

\section{Proofs of the main theorems}\label{sec: proofs}
The following lemma makes it legitimate to use the computation \eqref{eqn: repr. T_rhof(x)} and provides a helpful representation of the inner product and the norm on $\mathcal{H}_q$.

\begin{lemma}\label{lemma: properties of funtions in H_q}
Let $f,g\in\mathcal{H}_q$. It holds that 
\begin{enumerate}
\item[(a)] $\|f^{(k)}\|_{L^2(\R,\gamma)} <\infty$, for all $k\in\N$,
\item[(b)] $\lim_{x\to\infty}(f\phi)^{(n-1)}(x) =0$, for all $n\ge 1$,
\item[(c)] $\langle f,g \rangle_{\mathcal{H}_q} = \sum_{k=0}^\infty \frac{(q^{-1}-1)^k}{k!}\langle f^{(k)},g^{(k)} \rangle_{L^2(\R,\gamma)}$,
\item[(d)] $\|f\|^2_{\mathcal{H}_q} = \sum_{k=0}^\infty \frac{(q^{-1}-1)^k}{k!} \|f^{(k)}\|^2_{L^2(\R,\gamma)}$.
\end{enumerate}
\end{lemma}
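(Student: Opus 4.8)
My plan rests on two ingredients about the normalized Hermite polynomials $\hh_n$, one algebraic and one analytic. From $h_n' = n h_{n-1}$ one gets $\hh_n' = \sqrt{n}\,\hh_{n-1}$, and iterating, $\hh_n^{(k)} = \sqrt{n!/(n-k)!}\,\hh_{n-k}$ for $n\ge k$ (and $\hh_n^{(k)}=0$ for $n<k$). On the analytic side, Mehler's formula gives on the diagonal $\sum_{n=0}^\infty \hh_n(x)^2 t^n = (1-t^2)^{-1/2}\exp\!\big(\tfrac{t x^2}{1+t}\big)$ for $0<t<1$; combined with Cauchy--Schwarz this yields $|f(x)|\le (1-t^2)^{-1/4}\,\|f\|_{\mathcal{H}_t}\,\exp\!\big(\tfrac{t x^2}{2(1+t)}\big)$, whose exponent $\tfrac{t}{2(1+t)}<\tfrac14$ is strictly below $\tfrac12$. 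These two facts drive everything.

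For (a) I would first justify termwise differentiation. Writing $f^{(k)}=\sum_{m\ge 0} a_{m+k}\sqrt{(m+k)!/m!}\,\hh_m$, the coefficient sequence lies in $\mathcal{H}_{q'}$ for any $q<q'<1$, because $(m+k)!/m!$ is a degree-$k$ polynomial in $m$ while $(q/q')^m$ decays geometrically, so the rescaled coefficients are square-summable against $(q')^{-m}$. Hence each differentiated series converges locally uniformly (by \cite[Theorem 2]{Ali} in $\mathcal{H}_{q'}$, or by the Mehler bound); since $f$ is analytic, the classical $f^{(k)}$ exists, and the standard theorem on interchanging limits and derivatives identifies it with the termwise-differentiated series. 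Orthonormality of the $\hh_m$ in $L^2(\R,\gamma)$ then gives $\|f^{(k)}\|_{L^2(\R,\gamma)}^2=\sum_{n\ge k}|a_n|^2\,\tfrac{n!}{(n-k)!}$, which is finite because $\tfrac{n!}{(n-k)!}q^n$ is a polynomial in $n$ times a geometric factor, hence bounded by some $C_k$, whence $\|f^{(k)}\|^2\le C_k\sum_n|a_n|^2 q^{-n}<\infty$.

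Parts (c) and (d) are then a direct computation. Using the derivative formula and orthonormality, $\langle f^{(k)},g^{(k)}\rangle_{L^2(\R,\gamma)}=\sum_{n\ge k}a_n\overline{b_n}\,\tfrac{n!}{(n-k)!}$. Multiplying by $\tfrac{(q^{-1}-1)^k}{k!}$, summing over $k$, and reindexing by $n=m+k$, I would interchange the two sums; this is legitimate by Tonelli, since all terms are nonnegative ($q^{-1}-1>0$) and the rearranged sum equals $\sum_n|a_n||b_n|q^{-n}\le\|f\|_{\mathcal{H}_q}\|g\|_{\mathcal{H}_q}<\infty$ by Cauchy--Schwarz. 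The inner sum collapses by the binomial theorem, $\sum_{k=0}^n\binom{n}{k}(q^{-1}-1)^k=q^{-n}$, leaving $\sum_n a_n\overline{b_n}q^{-n}=\langle f,g\rangle_{\mathcal{H}_q}$, which is (c); part (d) is the case $g=f$.

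Finally, for (b) I would expand $(f\phi)^{(n-1)}$ by the Leibniz rule and use $\phi^{(m)}=(-1)^m h_m\phi$, so that $(f\phi)^{(n-1)}=\phi\sum_{j=0}^{n-1}\binom{n-1}{j}(-1)^{n-1-j}h_{n-1-j}\,f^{(j)}$. Each summand is a polynomial in $x$ times $f^{(j)}(x)$ times $\phi(x)$. Applying the Mehler bound to $f^{(j)}\in\mathcal{H}_{q'}$ gives $|f^{(j)}(x)|\le C\,e^{\alpha x^2}$ with $\alpha=\tfrac{q'}{2(1+q')}<\tfrac12$, so every summand is dominated by a polynomial times $e^{(\alpha-1/2)x^2}\to 0$ as $x\to\infty$. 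I expect the genuine obstacle to be exactly the input shared by (a) and (b): converting the abstract membership $f\in\mathcal{H}_q$ into (i) the right to differentiate the Hermite expansion termwise and (ii) an explicit sub-Gaussian growth bound on each $f^{(j)}$ with exponent strictly below $1/2$. Both rest on the Mehler estimate (equivalently, the bound behind \cite[Theorem 2]{Ali}); once it is secured, the algebraic identities in (a), (c), (d) and the decay in (b) follow routinely.
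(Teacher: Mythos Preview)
Your arguments for (a), (c), and (d) match the paper's essentially line for line: the same Hermite derivative identity $\hh_n^{(k)}=\sqrt{n!/(n-k)!}\,\hh_{n-k}$, Parseval, and the binomial collapse $\sum_{k=0}^n\binom{n}{k}(q^{-1}-1)^k=q^{-n}$. You are simply a bit more explicit than the paper about why termwise differentiation and the Tonelli interchange are legitimate.

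The genuine difference is in (b). The paper does not invoke Mehler here. Instead it shows $\int_0^\infty |(f\phi)^{(n-1)}(x)|\,\dd x<\infty$ by expanding with Leibniz, writing $\phi^{(m)}=(-1)^m h_m\phi$, and applying H\"older in $L^2(\R,\gamma)$ together with the finiteness from part~(a). Integrability forces any existing limit at infinity to be zero, and existence of the limit is obtained by writing $(f\phi)^{(n-1)}(x)=(f\phi)^{(n-1)}(0)+\int_0^x(f\phi)^{(n)}(y)\,\dd y$ and invoking the same integrability bound one order higher. Your route via the Mehler diagonal estimate is equally valid and arguably more direct --- it yields the explicit pointwise decay $|(f\phi)^{(n-1)}(x)|\le P(x)\,e^{(\alpha-1/2)x^2}$ with $\alpha<\tfrac14$ --- but it imports an additional analytic input (the reproducing-kernel bound) that the paper only brings in later, in the Krein--Rutman step. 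The paper's argument is more self-contained, relying only on (a) and H\"older; yours gives a quantitative rate of decay.
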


\begin{proof}
(a)\ Let $f\in\mathcal{H}_q$, i.e.\ we suppose that $f(x)=\sum_{n=0}^\infty a_n \hh_n(x)$ for $x\in\R$, with $\sum_{n=0}^\infty |a_n|^2 q^{-n}<\infty$. Note that $\hh_n^{(k)}(x)=\sqrt{\frac{n!}{(n-k)!}}\hh_{n-k}(x)$ for $k\le n$ (see~e.g.\ \cite[Section 6.1]{Andrews}). Thus, the derivatives of $f$ are given by 
\[f^{(k)}(x) = \sum_{n=0}^\infty a_n \hh_n^{(k)}(x) = \sum_{n=0}^\infty a_{n+k}\sqrt{\frac{(n+k)!}{n!}}\hh_n(x).\] Recall that $(\hh_n)_n$ is an orthonormal basis for $L^2(\R,\gamma)$. By Parseval's identity we get 
\[\|f^{(k)}\|_{L^2(\R,\gamma)} = \sum_{n=0}^\infty \left(|a_{n+k}|\sqrt{\frac{(n+k)!}{n!}}\right)^2 \le \sum_{n=0}^\infty |a_{n+k}|^2 (n+k)^{k} < \infty.\]

(b)\ Note that $(f\phi)^{(n-1)} = (\Re(f)\phi)^{(n-1)} + i(\Im(f)\phi)^{(n-1)}$. We have
\begin{align*}
\int_0^\infty |(\Re(f) \phi)^{(n-1)}(x)|\di x &= \int_0^\infty \Big| \sum_{k=0}^{n-1} \binom{n-1}{k} \Re(f)^{(k)}(x) \phi^{(n-1-k)}(x) \Big| \di x\\
&=  \int_0^\infty \Big| \sum_{k=0}^{n-1} \binom{n-1}{k} \Re(f)^{(k)}(x)h_{n-1-k}(x) \Big| \di \gamma(x)\\
&\le \sum_{k=0}^{n-1} \binom{n-1}{k}\int_\R |\Re(f)^{(k)}(x) h_{n-1-k}(x)| \di\gamma(x) \\
&\le \sum_{k=0}^{n-1} \binom{n-1}{k} \|\Re(f)^{(k)}\|_{L^2(\R,\gamma)} \|h_{n-1-k}\|_{L^2(\R,\gamma)} \\
&<\infty,
\end{align*}
by using H\"older's inequality in the last but one step and statement (a) in the last step.
Therefore, if $\lim_{x\to\infty}(\Re(f)\phi)^{(n-1)}(x)$ exists, then the limit must be zero.
The limit exists since
\begin{align*}
(\Re(f)\phi)^{(n-1)}(x) &=  \int_0^x (\Re(f)\phi)^{(n)}(y)\di y + (\Re(f)\phi)^{(n-1)}(0)\\
&\overset{x\to\infty}{\longrightarrow} \int_0^\infty (\Re(f)\phi)^{(n)}(y) \di y + (\Re(f)\phi)^{(n-1)}(0).
\end{align*}
We conclude similarly that $\lim_{x\to\infty}(\Im(f)\phi)^{(n-1)}(x)=0$. Hence, the assertion follows.

(c) Let $f(x)=\sum_{n=0}^\infty a_n \hh_n(x),\ g(x)=\sum_{n=0}^\infty b_n \hh_n(x)$ for $x\in\R$. As in the proof of statement (a) we have $f^{(k)}(x)= \sum_{n=k}^\infty a_n \sqrt{\frac{n!}{(n-k)!}}\hh_{n-k}(x)$ and $g^{(k)}(x)= \sum_{n=k}^\infty b_n \sqrt{\frac{n!}{(n-k)!}}\hh_{n-k}(x)$ for $x\in\R$ and $k\in\N$. Using that $(\hh_n)_n$ is an orthonormal basis for $L^2(\R,\gamma)$, we compute
\begin{align*}
\sum_{k=0}^\infty \frac{(q^{-1}-1)^k}{k!}\langle f^{(k)},g^{(k)} \rangle_{L^2(\R,\gamma)} &= \sum_{k=0}^\infty \frac{(q^{-1}-1)^k}{k!} \sum_{n=k}^\infty \frac{n!}{(n-k)!}a_n \overline{b_n}\\
&= \sum_{n=0}^\infty a_n \overline{b_n} \sum_{k=0}^n \binom{n}{k} (q^{-1}-1)^k \\
&= \sum_{n=0}^\infty a_n \overline{b_n} (q^{-1}-1+1)^n\\
&= \langle f,g \rangle_{\mathcal{H}_q}.
\end{align*}

(d) This statement follows directly from (c).
\end{proof}

Combining Lemma~\ref{lemma: properties of funtions in H_q}(b) with the fact that functions of $\mathcal{H}_q$ are analytic, it holds that, as in \eqref{eqn: repr. T_rhof(x)}, 
\[T_\rho f(x)= \sum_{n=0}^\infty \rho^n (-1)^{n} x^n \frac{1}{n!} \int_0^\infty (f\phi)^{(n)}(y)\di y \]
for all $f\in\mathcal{H}_q$ and $x\in\R$, i.e.\ (\ref{eqn:operatorisholomorphic}) holds.

\begin{proof}[Proof of Theorem~\ref{thm: operator holomorphic MA}]
We want to compute an upper bound for the operator norm of $T^{(n)}$ for $n\in\N$, which simultaneously shows that these operators are well-defined. 
Let $m_n(x):= x^n$, for $x\in\R$. Note that
\begin{align*}
\|T^{(n)}f\|_{\mathcal{H}_q} &= \|(-1)^n\frac{1}{n!}\int_0^\infty (f \phi)^{(n)}(y)\di y \cdot m_n \|_{\mathcal{H}_q}\\
&= \left|\frac{1}{n!}\int_0^\infty (f \phi)^{(n)}(y)\di y \right| \|m_n\|_{\mathcal{H}_q}.
\end{align*}

We recall the inverse explicit formula for the Hermite polynomials (see e.g.\ \cite[Section 2]{Patarroyo}):
\[ m_n = n! \sum_{j=0}^{\lfloor\frac{n}{2} \rfloor} \frac{h_{n-2j}}{2^j j! (n-2j)!} = \sum_{j=0}^{\lfloor\frac{n}{2} \rfloor} \frac{n!}{2^j j! \sqrt{(n-2j)!}}\hh_{n-2j}.\]
On one hand, by using the inequality $(2j)! \le 2^{2j}j!^2$ for $j\in\N$, we have

\begin{align*}
\|m_n\|_{\mathcal{H}_{q}}^2 &=  \sum_{j=0}^{\lfloor\frac{n}{2} \rfloor} \left(\frac{n!}{2^j j! \sqrt{(n-2j)!}}\right)^2 q^{-(n-2j)}\\
&= n! \sum_{j=0}^{\lfloor\frac{n}{2} \rfloor} \frac{n!}{ 2^{2j} j!^2 (n-2j)!}  q^{-(n-2j)}\\
&\le n! \sum_{j=0}^{\lfloor\frac{n}{2} \rfloor}  \frac{n!}{(2j)!  (n-2j)!}  q^{-(n-2j)}\\
&= n! \sum_{j=0}^{\lfloor\frac{n}{2} \rfloor} \binom{n}{2j}  q^{-(n-2j)}\\
&\le n! \sum_{j=0}^{n} \binom{n}{j}  q^{-(n-j)}\\
&= n! (1+q^{-1})^n.
\end{align*}
Hence,
\[\|m_n\|_{\mathcal{H}_q} \le  \sqrt{ n!} \cdot (1+q^{-1})^{n/2} .\]

On the other hand, we get, by using H\"older's inequality and Lemma~\ref{lemma: properties of funtions in H_q}(d),
\begin{align*}
& \quad\ \left|\frac{1}{n!}\int_0^\infty (f \phi)^{(n)}(y)\di y \right| \\
&\le \frac{1}{n!} \int_0^\infty \sum_{k=0}^n\binom{n}{k} |f^{(k)}(y)\phi^{(n-k)}(y)| \di y\\
&= \frac{1}{n!} \sum_{k=0}^n \binom{n}{k} \int_0^\infty  |f^{(k)}(y)h_{n-k}(y)\phi(y)| \di y\\
&\le \frac{1}{n!} \sum_{k=0}^n \binom{n}{k} \int_\R  |f^{(k)}(y)h_{n-k}(y)| \di \gamma(y)\\
&\le \frac{1}{n!} \sum_{k=0}^n \binom{n}{k} \|f^{(k)}\|_{L^2(\R,\gamma)}\cdot \|h_{n-k}\|_{L^2(\R,\gamma)}\\
&=  \sum_{k=0}^n \frac{1}{k!(n-k)!}\|f^{(k)}\|_{L^2(\R,\gamma)} \sqrt{(n-k)!}\\
&= \sum_{k=0}^n  \frac{(q^{-1}-1)^{k/2}}{\sqrt{k!}} \|f^{(k)}\|_{L^2(\R,\gamma)} \cdot \frac{1}{\sqrt{k! (n-k)!}(q^{-1}-1)^{k/2}}\\
&\le \left(\sum_{k=0}^n  \frac{(q^{-1}-1)^k}{k!} \|f^{(k)}\|_{L^2(\R,\gamma)}^2 \right)^{1/2} \cdot  \left(\sum_{k=0}^n \frac{1}{k! (n-k)!(q^{-1}-1)^k}\right)^{1/2}\\
&\le \|f\|_{\mathcal{H}_q}  \left(\frac{1}{n!}\sum_{k=0}^n \binom{n}{k}\frac{1}{(q^{-1}-1)^k} \right)^{1/2} \\
&=\|f\|_{\mathcal{H}_q}  \,\frac{1}{\sqrt{n!}} \left(\frac{1}{1-q}\right)^{n/2}.
\end{align*}

Taking these computations together, we obtain
\begin{align} \label{eqn:opnormalplusestimatewould}
\|T^{(n)}\| \le \left(\frac{1}{1-q}\right)^{n/2} \cdot (1+q^{-1})^{n/2} = \left(\frac{1+q^{-1}}{1-q}\right)^{n/2}.
\end{align}

Therefore, we get that for $|\rho| < \sqrt{\frac{1-q}{1+q^{-1}}}$ 
\[\sum_{n=0}^\infty \|\rho^n T^{(n)} \| < \infty.\]
The set of all linear and bounded operators on $\mathcal{H}_q$ is a Banach space and thus, $T_\rho = \sum_{n=0}^\infty \rho^n T^{(n)}$ is a linear and bounded operator on $\mathcal{H}_q$. 

For the compactness of $T_\rho$, note that $T^{(n)}$ is a finite-rank operator for all $n\in\N$, namely of rank $1$, i.e.\ the range of $T^{(n)}$ is one-dimensional. As a finite-rank operator, $T^{(n)}$ is a compact operator. The subset of all compact operators in the Banach space of the linear and bounded operators on $\mathcal{H}_q$ is itself a Banach space (see e.g.\ \cite[III.\ Theorem 4.7]{Kato}). Hence, $T_\rho = \sum_{n=0}^\infty \rho^n T^{(n)}$ is compact.
\end{proof}


\begin{proof}[Proof of Theorem~\ref{thm: connection to persistence problem MA}]
Let $-\sqrt{\frac{1-q}{1+q^{-1}}} < \rho < \sqrt{\frac{1-q}{1+q^{-1}}}$. We begin by relating the eigenvalue problem of $T_\rho$ to the persistence problem of the MA(1)-process. First, note that $S_\rho^N(\1)= T_\rho^N(\1)$ for all $N\in\N$. By \cite[Section~2.1]{AurMukZei} we can rewrite the persistence probability as follows:
\begin{align*}
\PP(X_0 \ge 0,\dots,X_N\ge 0)  = \int_{S} T_\rho^N(\1)(x_2)\di(\gamma\otimes\gamma)(x_1,x_2),
\end{align*}
with $S:=\left\{(x_1,x_2)\in\R^2\colon \rho x_1 + x_2 \ge 0\right\}$.
Let $r(T_\rho)$ be the spectral radius of $T_\rho$. We need to show that
\[\int_{S} T_\rho^N(\1)(x_2)\di(\gamma\otimes\gamma)(x_1,x_2) = r(T_\rho)^{N+o(N)}.\]
A priori we cannot exclude that $r(T_\rho)=0$. 
For the upper bound note that
\[\|f\|_{L^1(\R,\gamma)} \le \|f\|_{L^2(\R,\gamma)} \le \|f\|_{\mathcal{H}_q}\]
for all $f\in\mathcal{H}_q$ due to Lemma~\ref{lemma: properties of funtions in H_q}(d). Using this, we obtain

\begin{align*}
\int_{S} T_\rho^N(\1)(x_2)\di(\gamma\otimes\gamma)(x_1,x_2) &\le \|T_\rho^N(\1)\|_{L^1(\R,\gamma)}\\
&\le  \|T_\rho^N(\1)\|_{\mathcal{H}_q} \\
&\le   \|T_\rho^N\| \cdot \|\1\|_{\mathcal{H}_q}\\
&= r(T_\rho)^{N+o(N)},
\end{align*}
where the last step is due to Gelfand's formula, i.e.\ $r(T_\rho)= \lim_{N\to\infty}\|T_\rho^N\|^{\frac{1}{N}}$.
Now, we turn to the lower bound.
We need to consider two cases. If $r(T_\rho)=0$, then clearly
\[
\int_{S} T_\rho^N(\1)(x_2)\di(\gamma\otimes\gamma)(x_1,x_2) \ge r(T_\rho)^{N}.
\]

If $r(T_\rho)>0$, we show the lower bound by using the Krein-Rutman theorem (see \cite{KreRut}, \cite[Theorem 19.2]{Deimling}). For this purpose, let us define the cone $C:= \{f\in \mathcal{H}_q\colon f(x)\ge 0 \text{ for all }x\in\R\}$. 
From \cite[Proposition 1]{Ali} it follows that $\mathcal{H}_q$ is a reproducing kernel Hilbert space with reproducing kernel
\[K_q(x,y):= \sum_{n=0}^\infty q^n \hh_n(x)\hh_n(y)= \frac{1}{\sqrt{1-q^2}}e^{-\frac{q^2x^2+ q^2y^2 -2q xy}{2(1-q^2)}},\]
where the last equality is due to Mehler's formula (see \cite{Mehler}). It holds that $K_q^y(\cdot):= K_q(\cdot,y)\in C$ for all $y\in\R$. Since $\lh\{K_q^y\colon y\in \R\}$ is dense in $\mathcal{H}_q$ (see e.g.\ \cite{Aronszajn}), the closure of $C+(-C)$ is equal to $\mathcal{H}_q$. Further, we have $T_\rho(C)\subseteq C$. Therefore, the Krein-Rutman theorem can be applied and yields the existence of an eigenfunction $g\in C$ with eigenvalue $r(T_\rho)$. Note that any eigenfunction of $T_\rho$ is bounded since
$$
|T_\rho f(x)| \le \int_{-\rho x}^\infty |f(y)\phi(y)|\di y
\le \|f\|_{L^1(\R,\gamma)}
\le \|f\|_{\mathcal{H}_q},
$$
for all $f\in\mathcal{H}_q$ and $x\in\R$. Hence, $\|g\|_{\infty}<\infty$.
We obtain
\begin{align*}
\int_{S} T_\rho^N(\1)(x_2)\di(\gamma\otimes\gamma)(x_1,x_2) &\ge \int_{S} T_\rho^N\left(\frac{g}{\|g\|_{\infty}}\right)(x_2)\di(\gamma\otimes\gamma)(x_1,x_2)\\
&= r(T_\rho)^N  \int_{S} \frac{g(x_2)}{\|g\|_{\infty}}\di(\gamma\otimes\gamma)(x_1,x_2)\\
&= r(T_\rho)^{N+o(N)}.
\end{align*}
Thus,
\[\PP(X_0\ge 0,\dots,X_N\ge 0 ) = r(T_\rho)^{N+o(N)}.\]
Now, note that \cite[Proposition 2.3]{AurMukZei} implies that $r(T_\rho)>0$. Hence, \linebreak$\lambda_\rho := r(T_\rho)>0$ is the largest eigenvalue of $T_\rho$ by the Krein-Rutman theorem. 
Further, we have $\PP(X_0\ge 0,\dots, X_N\ge 0)\le \PP(\min_{0\le n \le \lfloor \frac{N}{2}\rfloor}X_{2n}\ge 0)$. 
Note that the random variables $\{X_{2n} \colon 0\le
n\le  \lfloor \frac{N}{2}\rfloor\}$ are independent. Hence,
\[\PP(X_0\ge 0,\dots, X_N\ge 0) \le \PP(X_0\ge 0)^{\lfloor \frac{N}{2}\rfloor +1},\]
which implies $\lambda_\rho<1$.

It remains to show that $\lambda_\rho$ admits a representation as a power series. We will see that this follows by \cite[Theorem 4]{AurKet}, which is based on the classical work of \cite{Kato}.
From the eigenvalue equation 
\begin{align}\label{eq: eigenvalue eq. T_0}
\lambda f(x)= T_0 f(x)= \1 \int_0^\infty f(y)\phi(y) \di y \quad\text{for all } x\in\R,
\end{align}
it follows that the largest eigenvalue of $T_0$ is given by $\lambda_0=\frac{1}{2}$. To obtain the analyticity of the eigenvalue at $0$ in $\rho$ by methods of perturbation theory, it is necessary to show that the algebraic multiplicity of $\lambda_0$ is equal to one.

For this purpose, let $P_{\lambda_0}$ be the spectral projection of $\lambda_0$. The algebraic multiplicity is defined by the dimension of $P_{\lambda_0}L^2(\R,\gamma)$. Due to the compactness of $T_0$, we get $P_{\lambda_0}L^2(\R,\gamma) = \ker(\lambda_0-T_0)^{v}$, where $v\in\N$ is the smallest natural number such that $\ker(\lambda_0-T_0)^{v} = \ker(\lambda_0-T_0)^{v+1}$ (see e.g.\ \cite{Conway}).\\
From the eigenvalue equation \eqref{eq: eigenvalue eq. T_0}, we see that $\ker(\lambda_0-T_0)^{1}$ is equal to the constant functions and therefore one-dimensional.\\
If we prove that $\ker(\lambda_0-T_0)^{1} = \ker(\lambda_0-T_0)^{2}$, it follows that the algebraic multiplicity of $\lambda_0$ is one. Let $g\in \ker(\lambda_0-T_0)^{2}$. Then, 
\begin{align*}
0 &= (\lambda_0-T_0)^{2} (g)\\
&= (\lambda_0 -T_0)(\lambda_0 g - T_0 (g))\\
&= \lambda_0^2 g -2 \lambda_0 T_0 (g) + T_0(T_0 (g)).
\end{align*}
Since $T_0 (g)$ is constant, the above equation yields that $g$ is constant, i.e.\ $g\in\ker(\lambda_0-T_0)^1$.
By \cite[Theorem 4]{AurKet}, it follows that $\lambda_\rho$ can be represented as a power series for $|\rho|<r_0$ for some $r_0>0$. The question of the radius of convergence is tricky and will be tackled in Section~\ref{sec:radius}. In particular, the only thing that is still to be proved is the bound for the radius of convergence, which can be found in Corollary~\ref{cor:radius} combined with Lemma~\ref{lem:bothrepresenationsareidentical}.
\end{proof}

\begin{remark}
    We remark that the operator $T_\rho$ is not normal. If it were, further results from perturbation theory would be applicable. In particular, a concrete bound for the radius of convergence would follow from Corollary~6 in \cite{AurKet} together with the bound (\ref{eqn:opnormalplusestimatewould}).
\end{remark}

\begin{proof}[Proof of Theorem~\ref{thm: iterative formula MA}]
The eigenvalue equation $T_\rho (f_\rho) = \lambda_\rho f_\rho$ reads:
$$
\sum_{k=0}^\infty \rho^k T^{(k)} \left( \sum_{m=0}^\infty \rho^m g_m\right) = \sum_{k=0}^\infty \rho^k K_k \cdot \sum_{m=0}^\infty \rho^m g_m.
$$
Sorting this in powers of $\rho$ (which is allowed within the radius of convergence) gives
$$
\sum_{n=0}^\infty \rho^n \sum_{k=0}^n T^{(k)} g_{n-k}  = \sum_{n=0}^\infty \rho^n \sum_{k=0}^n K_k  g_{n-k}.
$$
Since this holds for any $\rho\in(-\rho_0,\rho_0)$ with $\rho_0>0$, we must have
\begin{equation} \label{eqn:relationsKg}
\sum_{k=0}^n T^{(k)} g_{n-k}  = \sum_{k=0}^n K_k  g_{n-k},\qquad \text{for all $n\in\N$.}
\end{equation}
For $n=0$, this is
$$
T^{(0)} g_0 = K_0 g_0.
$$
Since the left-hand side is a constant, by the definition of $T^{(0)}=T_0$, we know that $g_0$ must be constant, so w.l.o.g.\ (multiplication of the eigenfunction) take
$$
g_0=\1.
$$

Fix $n\ge 1$. We now analyse the iterative structure of \eqref{eqn:relationsKg}:
\begin{equation} \label{eqn:polynn}
\sum_{k=1}^n T^{(k)} g_{n-k} + T^{(0)} g_n = K_n g_0 + \sum_{k=1}^{n-1} K_k g_{n-k} + K_0 g_n.
\end{equation}
Observe that, by the definition of the operators $T^{(k)}$, the first term on the left is a polynomial. Further, the second term on the left and the first term on the right are constants. The second term on the right involves only the $g_\ell$, $\ell<n$. Therefore, inductively we know that $g_n$ is a polynomial of degree at most $n$.

Let us denote the coefficients of the polynomials $g_\ell$ by $g_\ell^i$, i.e.\ $g_\ell(x)=:\sum_{i=0}^\ell g_\ell^i x^i$. Then comparing the coefficients of $x^0$ in \eqref{eqn:polynn} gives
$$
0 + T^{(0)} g_n = K_n g_0 + \sum_{k=1}^{n-1} K_k g_{n-k}^0 + K_0 g_n^0.
$$
Further, by the definition of $T^{(0)}$ and the represenation of $g_n$, we have
$$
T^{(0)} g_n = \sum_{i=0}^n g_n^i \int_0^\infty y^i \phi(y) \di y.
$$
Combining the last two equations and using that $\int_0^\infty y^0 \phi(y) \di y = K_0$, we see that the terms involving $g_n^0$ cancel; giving
\begin{equation} \label{eqn:constants0}
\sum_{i=1}^n g_n^i \int_0^\infty y^i \phi(y) \di y = K_n g_0 + \sum_{k=1}^{n-1} K_k g_{n-k}^0.
\end{equation}
None of the other polynomial terms in \eqref{eqn:polynn} uses $g_n^0$ either. Therefore, $g_n^0$ is in fact arbitrary and can be chosen to be zero. This however simplifies  \eqref{eqn:polynn} in the sense that
\begin{equation} \label{eqn:iterative1none}
g_n = \frac{1}{K_0} \left( \sum_{k=1}^n T^{(k)} g_{n-k} - \sum_{k=1}^{n-1} K_k g_{n-k} \right),
\end{equation}
because these are the parts of \eqref{eqn:polynn} involving the coefficients of $x^i$, $i>0$, only. Note that \eqref{eqn:iterative1none} computes $g_n$ with the help of $g_\ell$ and $K_\ell$ for $\ell<n$ only.

Similarly, now \eqref{eqn:constants0} simplifies (using $g_\ell^0=0$ and $g_0=\1$) to \eqref{eqn:newreprK}. Putting this back into \eqref{eqn:iterative1none} shows \eqref{eqn:iterative2}.
\end{proof}

\section{Radius of convergence} \label{sec:radius}

This section is devoted to the study of the radius of convergence of the series in Theorem~\ref{thm: connection to persistence problem MA}. In particular, we would like to finish the proof of that theorem by showing that the radius of convergence is at least $0.332$.

For this purpose, we first give an alternative description of the leading eigenvalue of the eigenvalue equation (\ref{eqn:eigenvalueequation}), which might be interesting in its own right, cf.\ (\ref{eqn:newrepresentationA}). We also show that the two descriptions must have the same radius of convergence. Using the alternative description, we can prove a lower bound for the radius of convergence, cf.\ Corollary~\ref{cor:radius}.

We start with a re-formulation of the eigenvalue equation (\ref{eqn:eigenvalueequation}) that gets rid of the eigenfunction.

\begin{lemma} \label{lem:differentrepresentation} 
Let $\rho\in[0,1]$. The leading eigenvalue $\lambda=\lambda_\rho$ of the eigenvalue equation (\ref{eqn:eigenvalueequation}) is the largest positive root of the equation
\begin{equation} \label{eqn:eigenvalueeqn2}
    \lambda  =  \sum_{k=0}^{\infty} \frac{\kappa_k(\rho)}{\lambda^k},
\end{equation}
    where $\kappa_0\equiv\frac{1}{2}$ and for $k\geq 1$:
\begin{eqnarray}
\kappa_k(\rho) &:=&  \frac{1}{(2\pi)^{\frac{k+1}{2}}}   \int_0^{\infty} \int_{-\rho s_0}^0 \ldots \int_{-\rho s_{k-2}}^0 \int_{-\rho s_{k-1}}^0 \exp\left(-\frac{1}{2}\sum_{i=0}^k s_i^2\right)  \dd s_k\ldots \dd s_0  \label{eqn:defn.f_k}
\\
&=&  \frac{\rho^{\frac{k(k+1)}{2}}}{(2\pi)^{\frac{k+1}{2}}}   \int_0^{\infty} \int_{-s_0}^0 \ldots \int_{-s_{k-2}}^0 \int_{-s_{k-1}}^0 \exp\left(-\frac{1}{2}\sum_{i=1}^{k} (\rho^i s_i)^2\right) e^{-\frac{s_0^2}{2}}  \dd s_k\ldots \dd s_0. \notag 
\end{eqnarray}
\end{lemma}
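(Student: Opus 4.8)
The plan is to eliminate the eigenfunction from (\ref{eqn:eigenvalueequation}) by iterating the equation at the single point $x=0$ and repeatedly splitting the half-line integral. Write $f=f_\rho$ for the leading eigenfunction (non-negative and bounded, as established in the proof of Theorem~\ref{thm: connection to persistence problem MA}) and $\lambda=\lambda_\rho=r(T_\rho)$. Evaluating (\ref{eqn:eigenvalueequation}) at $x=0$ gives $\lambda f(0)=\int_0^\infty f(s_0)\phi(s_0)\,\di s_0$. For $k\ge 0$ I introduce the nested quantities
\[
R_k:=\int_0^\infty\int_{-\rho s_0}^0\cdots\int_{-\rho s_{k-1}}^0 f(s_k)\,\phi(s_k)\cdots\phi(s_0)\,\di s_k\cdots\di s_0,
\]
so that $R_0=\lambda f(0)$ and $R_k$ is exactly the defining integral of $\kappa_k$ in (\ref{eqn:defn.f_k}) with the extra factor $f(s_k)$ in the innermost slot. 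Replacing this innermost $f(s_k)$ by $\frac1\lambda\bigl(\int_0^\infty f\phi+\int_{-\rho s_k}^0 f\phi\bigr)$ — which is merely additivity $\int_{-\rho s_k}^\infty=\int_{-\rho s_k}^0+\int_0^\infty$, valid irrespective of the sign of $s_k$ — and using $\int_0^\infty f\phi=\lambda f(0)$ yields the recursion $R_k=f(0)\,\kappa_k(\rho)+\tfrac1\lambda R_{k+1}$.

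Telescoping gives, for every $n$, the identity $\lambda f(0)=R_0=f(0)\sum_{k=0}^n \kappa_k(\rho)\lambda^{-k}+\lambda^{-(n+1)}R_{n+1}$. Two facts then settle that $\lambda_\rho$ solves (\ref{eqn:eigenvalueeqn2}). First, $f(0)>0$: since $f\ge 0$ is analytic and not identically zero, $\int_0^\infty f\phi>0$, hence $f(0)>0$. Second, $\lambda^{-(n+1)}R_{n+1}\to 0$. Here I use that $f$ is bounded together with the geometry of the nested domains: on the domain of $R_{n+1}$ one has $s_0>0$ and $|s_j|\le\rho\,|s_{j-1}|\le\rho^{\,j}s_0$, so the length of the $j$-th range is at most $\rho^{\,j}s_0$. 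Bounding each inner $\phi$ by $(2\pi)^{-1/2}$ gives
\[
|R_{n+1}|\le \|f\|_\infty\,(2\pi)^{-(n+2)/2}\,\rho^{(n+1)(n+2)/2}\int_0^\infty s_0^{\,n+1}e^{-s_0^2/2}\,\di s_0,
\]
and for $0\le\rho<1$ the factor $\rho^{(n+1)(n+2)/2}$ decays super-exponentially in $n$, dominating both $\lambda^{-(n+1)}$ and the Gaussian moment; this also shows $\sum_k \kappa_k(\rho)\lambda^{-k}$ converges. Dividing by $f(0)$ proves $\lambda_\rho=\sum_k \kappa_k(\rho)\lambda_\rho^{-k}$. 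The second displayed formula for $\kappa_k$ follows afterwards from the change of variables rescaling each range $(-\rho s_{i-1},0)$ to $(-s_{i-1},0)$, which factors out $\rho^{k(k+1)/2}$; this is routine. (At the endpoint $\rho=1$ the crude bound degenerates, and one would instead retain the Gaussian weights or pass to a limit; the application only needs $|\rho|<0.332$.)

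The main obstacle is to upgrade ``$\lambda_\rho$ is a root'' to ``$\lambda_\rho$ is the \emph{largest} positive root''. The plan is to show that every positive solution $\mu$ of (\ref{eqn:eigenvalueeqn2}) is an eigenvalue of $T_\rho$, whence $\mu\le r(T_\rho)=\lambda_\rho$ by the definition of the spectral radius. Concretely, I would attach to $\mu$ a candidate eigenfunction: differentiating (\ref{eqn:eigenvalueequation}) gives the reflection relation $\mu f'(x)=\rho\,\phi(\rho x)\,f(-\rho x)$, which, normalized by $f(0)=1$, can be solved by Picard iteration since $|\rho|<1$ makes the map $x\mapsto -\rho x$ contracting toward $0$; the resulting power series is precisely the expansion whose coefficients are the $g_m$. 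The closing/consistency condition that turns this $f_\mu$ into a genuine eigenfunction is $\mu f_\mu(0)=\int_0^\infty f_\mu\phi$, and running the same telescoping on $f_\mu$ identifies this condition with (\ref{eqn:eigenvalueeqn2}); so positive roots correspond to positive eigenvalues, and the largest root is $\lambda_\rho$. Alternatively, one can set up a (signed) renewal equation for $p_N:=\PP(X_0\ge0,\dots,X_N\ge0)$ with kernel $(\kappa_k)$, so that (\ref{eqn:eigenvalueeqn2}) is the characteristic equation of $\sum_N p_N z^N$; since $p_N=\lambda_\rho^{N+o(N)}$ by Theorem~\ref{thm: connection to persistence problem MA}, the reciprocal of the smallest positive root is forced to be $\lambda_\rho$. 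Either way, the delicate work is controlling the signs of the $\kappa_k$ and verifying that the reconstructed object genuinely lies in $\mathcal{H}_q$.
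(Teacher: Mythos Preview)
Your iteration argument is essentially the paper's proof: normalize $f(0)=1$, split $\int_{-\rho s_k}^\infty=\int_{-\rho s_k}^0+\int_0^\infty$ in the innermost slot, and telescope. Two remarks.

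\medskip
\textbf{Remainder bound.} Your box estimate $|s_j|\le\rho^j s_0$ gives volume $\rho^{(n+1)(n+2)/2}s_0^{\,n+1}$ and hence fails at $\rho=1$, as you note. But the nested constraints $|s_j|<\rho|s_{j-1}|$ actually cut out a simplex of volume $\rho^{(n+1)(n+2)/2}s_0^{\,n+1}/(n+1)!$; keeping this factorial (as the paper does, via Lemma~\ref{lem:boundf_k}) gives
\[
|\lambda^{-(n+1)}R_{n+1}|\le \|f\|_\infty\cdot\frac{\text{const}^{\,n}}{\Gamma(n/2+1)}\to 0
\]
uniformly for $\rho\in[0,1]$. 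So the $\rho=1$ case needs no separate treatment.

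\medskip
\textbf{``Largest root''.} You correctly flag this as the subtle point, and your proposed fixes (reconstructing an eigenfunction for each root via the reflection ODE, or a renewal argument) are plausible but, as you say, need work to make rigorous in $\mathcal{H}_q$. It is worth knowing that the paper's own proof of Lemma~\ref{lem:differentrepresentation} does \emph{not} establish the ``largest'' claim either: it only shows that $\lambda_\rho$ is \emph{a} root. The maximality is handled afterwards, by showing that $\xi\mapsto\Gamma_\rho(\xi)=\xi/\Psi_\rho(\xi)$ is strictly increasing on $[0,\infty)$, so (\ref{eqn:eigenvalueeqn2}) has a unique root with $\lambda\ge\tfrac12$; combined with $\lambda_\rho\ge\tfrac12$ for $\rho\ge0$ (e.g.\ by Gaussian comparison, since the $X_n$ are positively correlated), this pins down $\lambda_\rho$ as that root. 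So your concern is legitimate, but the resolution in the paper is analytic rather than via eigenfunction reconstruction.
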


Before we give the proof of Lemma~\ref{lem:differentrepresentation}, we note a useful technical bound for the coefficients $(\kappa_k)$.

\begin{lemma} \label{lem:boundf_k} For the $(\kappa_k)$ defined in (\ref{eqn:defn.f_k}), we have for all $k\geq 0$:
\begin{align}
|\kappa_k(\rho)|
=&
\frac{\rho^{\frac{k(k+1)}{2}}}{(2\pi)^{\frac{k+1}{2}}}   \int_0^{\infty} \int_0^{s_0} \ldots \int_0^{s_{k-2}} \int_0^{s_{k-1}} \exp\left(-\frac{1}{2}\sum_{i=1}^{k} (\rho^i s_i)^2\right) e^{-\frac{s_0^2}{2}}  \dd s_k\ldots \dd s_0 \label{eqn:moddefn.f_k2}
\\
\leq&
\frac{\rho^{\frac{k(k+1)}{2}}}{2^{k+1} \pi^{\frac{k}{2}}}  \frac{1}{\Gamma\left(\frac{k}{2}+1\right)}. \notag
\end{align}
\end{lemma}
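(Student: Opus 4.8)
The plan is to treat the two assertions in Lemma~\ref{lem:boundf_k} separately: first the identity relating $|\kappa_k(\rho)|$ to the integral over the ordered simplex in \eqref{eqn:moddefn.f_k2}, and then the $\Gamma$-function bound.

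For the identity, I would start from the second representation of $\kappa_k(\rho)$ in \eqref{eqn:defn.f_k} and pass to the absolute values $t_i:=|s_i|$ of the integration variables (with $t_0:=s_0$). Because the integrand depends on $s_1,\dots,s_k$ only through the even quantities $(\rho^i s_i)^2$, it is unchanged under $s_i\mapsto\pm s_i$; and because the signs of the nested lower limits $-s_{i-1}$ alternate along the nesting (so that the iterated integrals are oriented), the substitution maps their domain bijectively onto the ordered simplex $\{0<t_k<t_{k-1}<\dots<t_1<t_0\}$. The orientation reversals contribute a single global sign, which is exactly what is removed by the absolute value in the statement; this yields \eqref{eqn:moddefn.f_k2}. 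As $\rho\ge 0$ and the integrand is positive, the resulting simplex integral is non-negative, consistent with its being $|\kappa_k(\rho)|$.

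For the bound, the one and only inequality is to discard the inner Gaussian factor: since $(\rho^i s_i)^2\ge 0$ we have $\exp(-\tfrac12\sum_{i=1}^k(\rho^i s_i)^2)\le 1$. After this the $k$-fold inner integral is just the volume of the simplex $0<t_k<\dots<t_1<t_0$, namely $t_0^k/k!$, and the estimate collapses to a one-dimensional Gaussian moment. Concretely I would reduce to $\int_0^\infty t_0^k e^{-t_0^2/2}\dd t_0 = 2^{(k-1)/2}\Gamma\!\left(\tfrac{k+1}{2}\right)$ (via $u=t_0^2/2$), obtaining $|\kappa_k(\rho)| \le \frac{\rho^{k(k+1)/2}}{(2\pi)^{(k+1)/2}}\cdot\frac{2^{(k-1)/2}\Gamma((k+1)/2)}{k!}$.

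The final step is purely a matter of rewriting this in the stated form, where the $\Gamma$-value appears as $\Gamma(\tfrac{k}{2}+1)$. For this I would invoke the Legendre duplication formula in the shape $\Gamma(\tfrac{k+1}{2})\,\Gamma(\tfrac{k}{2}+1)=2^{-k}\sqrt{\pi}\,k!$; substituting and tracking the powers of $2$ and $\pi$ in the prefactor $(2\pi)^{-(k+1)/2}$ turns the previous display exactly into $\frac{\rho^{k(k+1)/2}}{2^{k+1}\pi^{k/2}}\frac{1}{\Gamma(k/2+1)}$. I do not expect a genuine obstacle: the inequality is a one-line estimate, and the only place demanding care is the orientation/sign bookkeeping of the first step together with the constant-chasing at the end. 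It is worth noting that, since dropping the inner Gaussian is the sole inequality used, the bound is sharp in the limit $\rho\to 0$, which explains why this elementary estimate already reproduces the exact constant.
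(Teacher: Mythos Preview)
Your proposal is correct and follows essentially the same route as the paper. The paper's proof in fact starts directly from the identity \eqref{eqn:moddefn.f_k2} (treating the reflection/orientation change of variables as immediate) and then proceeds exactly as you describe: bound the inner Gaussian factor by $1$, integrate out the ordered simplex to obtain $s_0^{k}/k!$, evaluate the remaining one-dimensional moment $\int_0^\infty s^k e^{-s^2/2}\,\dd s = 2^{(k-1)/2}\Gamma((k+1)/2)$, and simplify via the Legendre duplication formula.
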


\begin{proof}[Proof of Lemma~\ref{lem:boundf_k}]
Bounding all the exponentials in (\ref{eqn:moddefn.f_k2}) by one, except the last one, we obtain:
$$
|\kappa_k(\rho)|
\leq
\frac{\rho^{\frac{k(k+1)}{2}}}{(2\pi)^{\frac{k+1}{2}}k!}  \int_0^{\infty} s^{k} e^{-\frac{s^2}{2}} \dd s 
=
\frac{\rho^{\frac{k(k+1)}{2}}}{2 \pi^{\frac{k+1}{2}}k!}  \Gamma\left(\frac{k+1}{2}\right)
=
\frac{\rho^{\frac{k(k+1)}{2}}}{2^{k+1} \pi^{\frac{k}{2}}}  \frac{1}{\Gamma\left(\frac{k}{2}+1\right)},
$$
where we used the Legendre duplication formula $\Gamma(2t)=2^{2t-1}\Gamma(t)\Gamma(t+1/2)/\sqrt{\pi}$ for $t=(k+1)/2$ in the last step.
\end{proof}

\begin{remark}
A result similar to Lemma~\ref{lem:boundf_k} (and subsequently a result similar to Lemma~\ref{lem:differentrepresentation}) can be proved in the same way as long as the density $\phi$ has a superexponential decay.
\end{remark}

\begin{remark}
We cannot expect explicit values for the coefficients $\kappa_k(\rho)$. In fact, they are persistence probabilities themselves:
$$
\kappa_k(\rho)=\PP( 0\leq \xi_0 \leq \rho \xi_1 \leq \ldots \leq \rho^k \xi_k),
$$
where the $(\xi_i)$ are independent random variables with density $\phi$.
\end{remark}

\begin{proof}[Proof of Lemma~\ref{lem:differentrepresentation}]
The only purpose of the proof is to transform the eigenvalue equation:
\begin{equation} \label{eqn:langrangebuermannequationansatz1}
\lambda f(x) = \int_{-\rho x}^\infty f(s)\phi(s) \dd s = \int_0^\infty f(s)\phi(s) \dd s+\int_{-\rho x}^0 f(s)\phi(s) \dd s,\qquad x\in\R.
\end{equation}
Further, we set w.l.o.g.\ $f(0):=1$. Inserting this into (\ref{eqn:langrangebuermannequationansatz1}) gives
\begin{equation} \label{eqn:langrangebuermannequationan01}
\lambda  =  \int_0^\infty f(s)\phi(s) \dd s.
\end{equation}
Replacing the corresponding term in (\ref{eqn:langrangebuermannequationansatz1}) gives
\begin{equation} \label{eqn:langrangebuermannequationan2}
f(x) =1 +\lambda^{-1} \int_{-\rho x}^0 f(s)\phi(s) \dd s .
\end{equation}
Inserting (\ref{eqn:langrangebuermannequationan2}) into (\ref{eqn:langrangebuermannequationan01}) we obtain
\begin{eqnarray*}
\lambda &=&\int_0^\infty  \left[ 1+\lambda^{-1} \int_{-\rho s_0}^0 f(s_1)\phi(s_1) \dd s_1 \right] \phi(s_0) \dd s_0 
\\
&=& \int_0^\infty \phi(s_0) \dd s_0 +\lambda^{-1}\int_0^\infty \int_{-\rho s_0}^0 f(s_1)\phi(s_1) \dd s_1  \phi(s_0) \dd s_0
\\
&=& \frac{1}{2} +\lambda^{-1}\int_0^\infty \int_{-\rho s_0}^0 f(s_1)\phi(s_1) \dd s_1  \phi(s_0) \dd s_0.
\end{eqnarray*}
Iterating this procedure gives
$$
\lambda =\sum_{k=0}^{N-1} \lambda^{-k} \kappa_k(\rho) + \lambda^{-N} \int_0^\infty \int_{-\rho s_0}^0 \ldots \int_{-\rho s_{N-1}}^0 f(s_N) \phi(s_N) \dd s_N \ldots \phi(s_1) \dd s_1  \phi(s_0) \dd s_0 .
$$
We now let $N\to\infty$ and show that the remainder term vanishes. For this purpose, we use that the eigenfunction $f$ is bounded and we employ the bound from Lemma~\ref{lem:boundf_k}:
\begin{align*}
& \left| \lambda^{-N} \int_0^\infty \int_{-\rho s_0}^0 \ldots \int_{-\rho s_{N-1}}^0 f(s_N) \phi(s_N) \dd s_N \ldots \phi(s_1) \dd s_1  \phi(s_0) \dd s_0 \right| 
\\
\leq &
||f||_\infty \cdot \frac{\mbox{const}^N}{\Gamma(N/2+1)} \to 0. \qquad\qedhere{}
\end{align*}
\end{proof}

We are going to re-write the eigenvalue equation in the following way. Define
$$
\Psi_\rho(\xi):=\sum_{k=1}^\infty 2^{k+1} \kappa_k(\rho) (1+\xi)^{-k}.
$$
Note that finding solutions of (\ref{eqn:eigenvalueeqn2}) for $\lambda\geq\frac{1}{2}$ is equivalent to finding solutions of the following equation for $\xi\geq 0$:
\begin{equation} \label{eqn:eigenvalueeqn3}
    \xi =  \Psi_\rho(\xi),\qquad \lambda=\frac{1}{2}( 1+\xi).
\end{equation}
That means, we are looking for the largest root $\xi\geq 0$ of equation (\ref{eqn:eigenvalueeqn3}). Next, we define
$$
\Gamma_\rho(\xi):=\frac{\xi}{\Psi_\rho(\xi)},\qquad \xi\geq 0.
$$
Let us state some important properties of the function $\Gamma_\rho$ that will allow us to invert the function.
\begin{lemma}
    We have $\Gamma_\rho(0)=0$, $\lim_{\xi\to\infty}\Gamma_\rho(\xi)=\infty$. Further, for $\rho\in[0,1]$, the function $\xi \mapsto \Gamma_\rho(\xi)$ is strictly increasing.
\end{lemma}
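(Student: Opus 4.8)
The plan is to read off all three statements from properties of the single function $\Psi_\rho$, since $\Gamma_\rho=\xi/\Psi_\rho$. The one piece of soft analysis needed is that the defining series for $\Psi_\rho$ may be differentiated term by term on $[0,\infty)$. Writing $u=(1+\xi)^{-1}\in(0,1]$, the estimate of Lemma~\ref{lem:boundf_k} gives $2^{k+1}|\kappa_k(\rho)|\le \rho^{k(k+1)/2}\pi^{-k/2}/\Gamma(\frac{k}{2}+1)$, so both $\sum_{k\ge1}2^{k+1}\kappa_k(\rho)u^{k}$ and its formal derivative converge uniformly on $[0,\infty)$ (each factor $u^{k}\le1$ there). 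Hence $\Psi_\rho$ is $C^1$ with $\Psi_\rho'(\xi)=-\sum_{k\ge1}k\,2^{k+1}\kappa_k(\rho)(1+\xi)^{-k-1}$, and it is legitimate to manipulate $\Psi_\rho$ termwise below.

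Next I would record the signs. By the probabilistic identity in the Remark, $\kappa_k(\rho)=\PP(0\le\xi_0\le\rho\xi_1\le\cdots\le\rho^{k}\xi_k)\ge0$ for all $k$ and all $\rho\in[0,1]$, with $\kappa_1(\rho)>0$ once $\rho>0$. Thus on $[0,\infty)$ the series $\Psi_\rho$ is a sum of nonnegative terms whose $k=1$ term is strictly positive, so $0<\Psi_\rho(\xi)<\infty$; and since each $(1+\xi)^{-k}$ is strictly decreasing, $\Psi_\rho$ is strictly decreasing, i.e.\ $\Psi_\rho'(\xi)<0$ for $\xi\ge0$. (The value $\rho=0$ is degenerate: there $\Psi_0\equiv0$ and $\Gamma_0$ is undefined, the eigenvalue equation $\xi=\Psi_\rho(\xi)$ collapsing to $\xi=0$; the substantive range is $\rho\in(0,1]$.)

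The three conclusions then follow quickly. Since $\Psi_\rho(0)\in(0,\infty)$ we get $\Gamma_\rho(0)=0/\Psi_\rho(0)=0$. For the behaviour at infinity, $(1+\xi)\Psi_\rho(\xi)=4\kappa_1(\rho)+\sum_{k\ge2}2^{k+1}\kappa_k(\rho)(1+\xi)^{-(k-1)}\to4\kappa_1(\rho)>0$, so $\Gamma_\rho(\xi)=\xi(1+\xi)\big/\big((1+\xi)\Psi_\rho(\xi)\big)\to\infty$. For strict monotonicity the quotient rule gives $\Gamma_\rho'=(\Psi_\rho-\xi\Psi_\rho')/\Psi_\rho^{2}$, and because $\Psi_\rho>0$, $\xi\ge0$ and $\Psi_\rho'<0$, the numerator equals $\Psi_\rho+\xi|\Psi_\rho'|>0$; hence $\Gamma_\rho'>0$ throughout $[0,\infty)$.

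The step I expect to be the real obstacle is the sign claim $\kappa_k(\rho)\ge0$ on which the positivity and monotonicity of $\Psi_\rho$ rest. The inner integrals in (\ref{eqn:defn.f_k}) run with reversed orientation in some variables — this is precisely why Lemma~\ref{lem:boundf_k} controls only $|\kappa_k(\rho)|$ — so one must make sure the coefficients entering $\Psi_\rho$ are genuinely nonnegative rather than nonnegative only up to a sign. Should the $\kappa_k(\rho)$ in fact alternate in sign, the clean ``positive and decreasing'' argument fails and one must instead show directly that the positive $k=1$ contribution dominates the sign-varying tail, uniformly in $\rho\in(0,1]$ and $\xi\ge0$. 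Here I would use the $\rho$-uniform bound $2^{k+1}|\kappa_k(\rho)|\le2^{k+1}|\kappa_k(1)|=1/(k+1)!$, which holds because the defining probability is nondecreasing in $\rho$; the factorial decay reduces positivity of $\Psi_\rho$ and of $\Psi_\rho-\xi\Psi_\rho'$ to checking an explicit low-degree polynomial in $u$ on $(0,1]$ up to a negligible remainder $\sum_{k\ge K}1/k!$, the extremal configuration being $\rho=1$ with $\xi\to0$.
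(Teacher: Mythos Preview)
Your main argument rests on the claim $\kappa_k(\rho)\ge 0$ for all $k$, and this is false. A direct computation from (\ref{eqn:defn.f_k}) shows that for $\rho>0$ the signs follow the pattern $\kappa_k=(-1)^{\lfloor k/2\rfloor}|\kappa_k|$: once $s_1\in(-\rho s_0,0)$ is negative, the next inner integral $\int_{-\rho s_1}^0$ runs from a positive lower limit down to $0$ and picks up a minus sign, and so on. In particular $\kappa_2(\rho)<0$. The probabilistic identity in the Remark describes $|\kappa_k(\rho)|$, which is exactly why Lemma~\ref{lem:boundf_k} is phrased for the modulus; you were right to be suspicious of relying on it. With $\kappa_2<0$ the ``$\Psi_\rho>0$ and strictly decreasing'' route collapses, and both the positivity $\Psi_\rho(0)>0$ and the derivative sign $\Psi_\rho-\xi\Psi_\rho'>0$ need a genuine domination argument.

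Your fallback does not close the gap either. The $\rho$-uniform bound $2^{k+1}|\kappa_k(\rho)|\le 1/(k+1)!$ is correct, but it cannot deliver uniformity in $\rho\in(0,1]$: as $\rho\to 0$ the positive leading term $4\kappa_1(\rho)=\tfrac{2}{\pi}\arctan(\rho)\sim \tfrac{2\rho}{\pi}$ vanishes, while any tail cut off at $k\ge K$ with your bound contributes a fixed constant $\sum_{k\ge K}1/(k+1)!$. So the extremal case is \emph{small} $\rho$, not $\rho=1$, and the low-degree check in $u$ never catches up. What is actually needed is to keep the $\rho$-dependence in the tail, precisely the factor $\rho^{k(k+1)/2}$ from Lemma~\ref{lem:boundf_k}. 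The paper does exactly this: it bounds $\Psi_\rho(\xi)-\xi\Psi_\rho'(\xi)$ below by
\[
4\,\frac{1+2\xi}{(1+\xi)^2}\left[\frac{1}{2\pi}\arctan(\rho)\;-\;\sum_{k\ge 2}\frac{\rho^{k(k+1)/2}}{\pi^{k/2}\Gamma(\tfrac{k}{2}+1)}\cdot\frac{1+(k+1)\xi}{(1+\xi)^{k-1}(1+2\xi)}\right],
\]
observes that the $\xi$-dependent factor in the sum is maximised at $\xi=0$, and then reduces the question to the elementary inequality $\tfrac{1}{2\pi}\arctan(\rho)>\rho^3/(\sqrt{\pi}(\sqrt{\pi}-\rho))$ for $\rho\in(0,1]$. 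The $\rho^{k(k+1)/2}$ ensures the tail is $O(\rho^3)$, which is what makes the comparison with $\arctan(\rho)\sim\rho$ work across the whole range.
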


\begin{proof} The first two properties follow immediately from Lemma~\ref{lem:boundf_k}. To see the last property, note that the derivative of $\Gamma_\rho$ has the same sign as:
\begin{align}
\Psi_\rho(\xi) - \xi \Psi_\rho^\prime(\xi) &= \sum_{k=1}^{\infty} 2^{k+1} \kappa_k(\rho)(1+\xi)^{-k}  +\sum_{k=1}^{\infty} 2^{k+1} \kappa_k(\rho)k \xi(1+\xi)^{-k-1} \notag\\
&= 4 \kappa_1(\rho) \frac{1+2\xi}{(1+\xi)^2}+ \sum_{k=2}^{\infty}  2^{k+1} \kappa_k(\rho) \frac{1+(k+1)\xi}{(1+\xi)^{k+1}} \notag
\\
&\geq  4 \kappa_1(\rho) \frac{1+2\xi}{(1+\xi)^2}   - \sum_{k=2}^{\infty}  \frac{\rho^{\frac{k(k+1)}{2}}}{\pi^{\frac{k}{2}}}  \frac{1}{\Gamma\left(\frac{k}{2}+1\right)}
  \frac{1+(k+1)\xi}{(1+\xi)^{k+1}} \notag
\\
&= 4 \,\frac{1+2\xi}{(1+\xi)^2}\left[ \frac{1}{2\pi}\, \text{arctan}(\rho)    - \sum_{k=2}^{\infty}  \frac{\rho^{\frac{k(k+1)}{2}}}{\pi^{\frac{k}{2}}}  \frac{1}{\Gamma\left(\frac{k}{2}+1\right)}
  \frac{1+(k+1)\xi}{(1+\xi)^{k-1}(1+2\xi)} \right], \label{eqn:exteljf}
\end{align}
where differentiating under the sum in the first step is allowed due to Lemma~\ref{lem:boundf_k} and we note that
\begin{equation} \label{eqn:f_1explicit}
\kappa_1(\rho) = \frac{1}{2\pi} \int_0^{\infty}  \int_{-\rho z}^0 e^{-x^2/2} e^{-z^2/2} \dd x \dd z=\frac{1}{2\pi} \text{arctan}(\rho)>0.
\end{equation}
To see the last equality, differentiate with respect to $\rho$. Now note that, for $k\geq 2$, the function
$$
\xi \mapsto  \frac{1+(k+1)\xi}{(1+\xi)^{k-1}(1+2\xi)}
$$
is strictly decreasing, which can be see by differentiation. 
Therefore, we can replace it in (\ref{eqn:exteljf}) by its value at $\xi=0$ and thus get the estimate:
\begin{align*}
\Psi_\rho(\xi) - \xi \Psi_\rho^\prime(\xi)
& \geq  4 \,\frac{1+2\xi}{(1+\xi)^2}\left[ \frac{1}{2\pi}\, \text{arctan}(\rho)    - \sum_{k=2}^{\infty}  \frac{\rho^{\frac{k(k+1)}{2}}}{\pi^{\frac{k}{2}}}  \frac{1}{\Gamma\left(\frac{k}{2}+1\right)} \right]
\\
& \geq  4 \,\frac{1+2\xi}{(1+\xi)^2}\left[ \frac{1}{2\pi}\, \text{arctan}(\rho)    - \sum_{k=2}^{\infty}  \frac{\rho^{k+1}}{\pi^{\frac{k}{2}}}   \right]
\\
&=  4 \,\frac{1+2\xi}{(1+\xi)^2}\left[ \frac{1}{2\pi}\, \text{arctan}(\rho)    -\frac{\rho^3}{\sqrt{\pi}(\sqrt{\pi}-\rho)}\right],
\end{align*}
which is strictly positive for all $\rho\in(0,1]$ and all $\xi\geq 0$.
\end{proof}

\begin{corollary}
    The largest solution of (\ref{eqn:eigenvalueeqn2}) admits the following representation
    \begin{equation} \label{eqn:eqn9buermann}
        \lambda = \frac{1}{2} +\frac{1}{2} \sum_{n=1}^\infty \frac{1}{n!} \left. \left( \frac{\partial^{n-1}}{\partial \xi^{n-1}}\left[ \Psi_\rho(\xi)^n\right] \right)\right|_{\xi=0}.
    \end{equation}
\end{corollary}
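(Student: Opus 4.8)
The plan is to recognise the corollary as a direct application of the Lagrange--B\"urmann inversion formula. First I would restate the problem in the variable $\xi$: by (\ref{eqn:eigenvalueeqn3}), the largest solution $\lambda\geq\frac{1}{2}$ of (\ref{eqn:eigenvalueeqn2}) corresponds, via $\lambda=\frac{1}{2}(1+\xi)$, to the largest non-negative root of $\xi=\Psi_\rho(\xi)$. Since $\Psi_\rho(\xi)>0$ for $\xi\geq 0$, this is exactly the root of $\Gamma_\rho(\xi)=1$, where $\Gamma_\rho(\xi)=\xi/\Psi_\rho(\xi)$. By the previous lemma, $\Gamma_\rho$ maps $[0,\infty)$ strictly increasingly onto $[0,\infty)$, so the root exists, is unique, and $\Gamma_\rho$ is invertible on the real half-line.

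Next I would set up the inversion. The function $\Psi_\rho(\xi)=\sum_{k=1}^\infty 2^{k+1}\kappa_k(\rho)(1+\xi)^{-k}$ is analytic in a neighbourhood of $\xi=0$ (the series converges there by Lemma~\ref{lem:boundf_k}) and satisfies $\Psi_\rho(0)=\sum_{k=1}^\infty 2^{k+1}\kappa_k(\rho)\neq 0$ for $\rho\in(0,1]$, because $\kappa_1(\rho)=\frac{1}{2\pi}\arctan(\rho)>0$. Writing $w=\Gamma_\rho(\xi)=\xi/\Psi_\rho(\xi)$, the Lagrange--B\"urmann formula applied with $g=\Psi_\rho$ produces the local analytic inverse $\xi=\xi(w)$ with $\xi(0)=0$, namely
\[
\xi(w)=\sum_{n=1}^\infty \frac{w^n}{n!}\left.\left(\frac{\partial^{n-1}}{\partial\xi^{n-1}}\Psi_\rho(\xi)^n\right)\right|_{\xi=0}.
\]
I would then evaluate this series at the target value $w=1$ (the equation $\Gamma_\rho(\xi)=1$) to obtain the sought root $\xi$, and substitute into $\lambda=\frac{1}{2}(1+\xi)$ to recover precisely (\ref{eqn:eqn9buermann}).

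The main obstacle is not the algebra but the analytic justification of evaluating the inversion series at $w=1$ rather than merely in a neighbourhood of $w=0$. The Lagrange--B\"urmann expansion a priori converges only inside the disk of analyticity of $\xi(w)$ about the origin, so one must verify that $w=1$ lies within this disk (with convergence) and that the value produced is the genuine real root guaranteed by the monotonicity of $\Gamma_\rho$. This is exactly the radius-of-convergence question addressed in the remainder of the section: the bounds on $\kappa_k(\rho)$ from Lemma~\ref{lem:boundf_k}, which exhibit super-exponential decay through the factor $\rho^{k(k+1)/2}/\Gamma(k/2+1)$, are what control the location of the nearest complex singularity of the inverse and thereby secure convergence at $w=1$ for the admissible range of $\rho$. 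I would therefore record (\ref{eqn:eqn9buermann}) as an identity valid wherever this series converges, deferring the quantitative convergence estimate to the subsequent corollary on the radius of convergence.
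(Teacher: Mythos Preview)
Your proposal is correct and follows essentially the same route as the paper: both recast the equation as $\Gamma_\rho(\xi)=1$, invoke the previous lemma for existence and uniqueness of the real root, and then apply the Lagrange--B\"urmann formula to invert $w=\xi/\Psi_\rho(\xi)$ at $w=1$. Your treatment is in fact more careful about the convergence at $w=1$, which the paper likewise defers to the subsequent corollary on the radius of convergence.
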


\begin{proof}
    We use the Lagrange-B\"urmann formula, a variant of the Lagrange inversion formula. Knowing that $\Gamma_\rho(0)=0$, $\Gamma_\rho(\infty)=\infty$, and that it is strictly increasing, we can resolve the equation $\Gamma_\rho(\xi)=1$, i.e.\ compute the inverse of $\Gamma_\rho$ at $1$. The result from the Lagrange-B\"urmann formula is precisely the statement of the corollary. Note that (\ref{eqn:eigenvalueeqn2}) admits several solutions in $\lambda$, while here we show that there is only one solution in $\xi\geq 0$, i.e.\ one solution in $\lambda$ with $\lambda\geq \frac{1}{2}$.
\end{proof}

\begin{corollary}
    The largest solution of (\ref{eqn:eigenvalueeqn2}) admits the following representation
    \begin{equation} \label{eqn:newrepresentationA}
\lambda=\frac{1}{2}+  \sum_{n=1}^{\infty}\frac{(-1)^{n-1}2^{n-1}}{n!}\sum_{k_1=1}^{\infty}\ldots  \sum_{k_{n}=1}^{\infty} \left(\prod_{j=1}^{n}  2^{k_j} \kappa_{k_j}(\rho) \right)  \frac{ \Gamma(k_1+\ldots+k_{n}+n-1 )}{\Gamma(k_1+\ldots+k_{n}) }.
    \end{equation}
\end{corollary}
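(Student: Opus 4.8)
The plan is to start from the Lagrange--B\"urmann representation (\ref{eqn:eqn9buermann}) established in the previous corollary and to make the derivatives appearing there fully explicit. First I would expand the $n$-th power of $\Psi_\rho$ by multiplying out its defining series: since $\Psi_\rho(\xi)=\sum_{k\ge 1}2^{k+1}\kappa_k(\rho)(1+\xi)^{-k}$, collecting the factor $2^{\sum_j(k_j+1)}=2^{K+n}$ gives
\[
\Psi_\rho(\xi)^n=2^n\sum_{k_1,\dots,k_n\ge 1}\left(\prod_{j=1}^n 2^{k_j}\kappa_{k_j}(\rho)\right)(1+\xi)^{-(k_1+\dots+k_n)},
\]
where I write $K:=k_1+\dots+k_n$. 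The only $\xi$-dependence is through the single power $(1+\xi)^{-K}$, whose derivatives are elementary:
\[
\frac{\partial^{n-1}}{\partial\xi^{n-1}}(1+\xi)^{-K}=(-1)^{n-1}\frac{\Gamma(K+n-1)}{\Gamma(K)}(1+\xi)^{-K-n+1},
\]
which at $\xi=0$ reduces to $(-1)^{n-1}\Gamma(K+n-1)/\Gamma(K)$. Substituting this into (\ref{eqn:eqn9buermann}) and absorbing the constant $\tfrac12\cdot 2^n=2^{n-1}$ produces exactly (\ref{eqn:newrepresentationA}).

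The single analytic point that needs care is the interchange of the differentiation $\partial^{n-1}/\partial\xi^{n-1}$ with the (now $n$-fold) infinite summation, together with the subsequent rearrangement of the resulting multiple series. This is where Lemma~\ref{lem:boundf_k} does the work: the bound $|\kappa_k(\rho)|\le \rho^{k(k+1)/2}\,2^{-(k+1)}\pi^{-k/2}/\Gamma(k/2+1)$ supplies superexponential decay in $k$ (factorial decay from $1/\Gamma(k/2+1)$, reinforced by $\rho^{k(k+1)/2}$ when $\rho<1$), so that for $\rho\in[0,1]$ and $\xi$ in a complex neighbourhood of $0$ the series for $\Psi_\rho$ and all of its formal derivatives converge uniformly on compact sets. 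Hence $\Psi_\rho$ is holomorphic near $\xi=0$, so is $\Psi_\rho^n$, and its $(n-1)$-th derivative may be computed by differentiating the product series term by term; the same bound guarantees absolute convergence of the $n$-fold sum $\sum_{k_1,\dots,k_n}\prod_j 2^{k_j}|\kappa_{k_j}(\rho)|\,\Gamma(K+n-1)/\Gamma(K)$, which legitimises the reordering into the stated form.

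I expect the main obstacle to be precisely this convergence and reordering bookkeeping: one must check that the decay furnished by Lemma~\ref{lem:boundf_k} dominates both the combinatorial growth of the number of tuples $(k_1,\dots,k_n)$ with a fixed value of $K$ (namely $\binom{K-1}{n-1}$) and the growth of the Gamma-quotient $\Gamma(K+n-1)/\Gamma(K)=K(K+1)\cdots(K+n-2)\sim K^{n-1}$, uniformly enough to justify the term-by-term operations. Both of these grow only polynomially in $K$ for fixed $n$, whereas $\prod_j 2^{k_j}|\kappa_{k_j}(\rho)|$ decays faster than any geometric rate, so the verification is routine once set up. Everything else --- the multinomial expansion, the power-rule derivative, and the collection of constants --- is a direct computation.
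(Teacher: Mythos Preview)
Your argument is correct and, in fact, somewhat more direct than the paper's own proof. The paper does not differentiate the product series term by term; instead it rewrites $\Psi_\rho$ as a Laplace transform
\[
\Psi_\rho(\xi)=\int_0^\infty e^{-\xi x}\psi_\rho(x)\,\dd x,\qquad
\psi_\rho(x)=e^{-x}\sum_{k\ge 1}\frac{2^{k+1}}{\Gamma(k)}\kappa_k(\rho)\,x^{k-1},
\]
so that $\Psi_\rho(\xi)^n$ becomes the Laplace transform of the $n$-fold convolution $\psi_\rho^{\ast(n)}$. The $(n{-}1)$-th derivative at $\xi=0$ is then $(-1)^{n-1}\int_0^\infty x^{n-1}\psi_\rho^{\ast(n)}(x)\,\dd x$, and the convolutions $\psi_\rho^{\ast(n)}$ are computed inductively via Beta integrals to produce the ratio $\Gamma(K+n-1)/\Gamma(K)$. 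Your route --- expanding the $n$-th power of the series directly and applying the elementary derivative formula for $(1+\xi)^{-K}$ --- bypasses the Laplace/convolution machinery entirely and reaches the same identity with less overhead; the paper's approach, on the other hand, makes the convolution structure explicit, which some readers may find conceptually cleaner. The analytic justification in both cases rests on the same superexponential bound from Lemma~\ref{lem:boundf_k}, and your remark that for fixed $n$ the combinatorial factors grow only polynomially in $K$ while $\prod_j 2^{k_j}|\kappa_{k_j}(\rho)|$ decays superexponentially is exactly what is needed.
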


\begin{proof}
To compute the successive derivatives in (\ref{eqn:eqn9buermann}), we write $\Psi_\rho$ as a Laplace transform
$$
\Psi_\rho(\xi) =\sum_{k=1}^{\infty}  2^{k+1} \kappa_k(\rho)  \frac{1}{\Gamma(k)}\int_0^{\infty}e^{-\xi x} e^{-x} x^{k-1}    \dd x= \int_0^{\infty}  e^{-\xi x } \psi_\rho(x)\dd x,
$$
where we can exchange the sum and integral, due to Lemma~\ref{lem:boundf_k}, and we set
$$
\psi_\rho(x) :=e^{-x}   \sum_{k=1}^{\infty} \frac{2^{k+1}}{\Gamma(k)}  \kappa_k(\rho)  x^{k-1}  .
$$
As a consequence,
\begin{align*}
\left( \frac{\partial^{n-1}}{\partial \xi^{n-1}}  \left(\Psi_\rho(\xi)\right)^n  \right)\bigg|_{\xi=0}
& = \left(\int_0^{\infty} (-1)^{n-1} x^{n-1}e^{-\xi x} \psi_\rho^{\ast(n)}(x) \dd x  \right)\bigg|_{\xi=0}
\\
&=  (-1)^{n-1} \int_0^{\infty}  x^{n-1}\psi_\rho^{\ast(n)}(x) \dd x,
\end{align*}
where $\ast$ denotes the usual convolution product. For the first term, we have
$$
\psi_\rho^{\ast (1)}(x) = \psi_\rho(x)=e^{-x}   \sum_{k_1=1}^{\infty} \frac{2^{k_1+1}}{\Gamma(k_1)}  \kappa_{k_1}(\rho)  x^{k_1-1} .
$$
For the second term, we get
\begin{align*}
\psi_\rho^{\ast (2)}(x)
 &= e^{-x} \int_0^x  \sum_{k_1=1}^{\infty} \frac{2^{k_1+1}}{\Gamma(k_1)}  \kappa_{k_1}(\rho)  (x-y)^{k_1-1}  \sum_{k_2=1}^{\infty} \frac{2^{k_2+1}}{\Gamma(k_2)}  \kappa_{k_2}(\rho)  y^{k_2-1} \dd y
 \\
&= e^{-x}  \sum_{k_1=1}^{\infty} \frac{2^{k_1+1}}{\Gamma(k_1)}  \kappa_{k_1}(\rho)    \sum_{k_2=1}^{\infty} \frac{2^{k_2+1}}{\Gamma(k_2)}  \kappa_{k_2}(\rho)  x^{k_1+k_2-1}\int_0^1 (1-y)^{k_1-1}   y^{k_2-1} \dd y
\\
&= e^{-x}  \sum_{k_1=1}^{\infty} \sum_{k_2=1}^{\infty} 2^{k_1+k_2+2} \kappa_{k_1}(\rho)   \kappa_{k_2}(\rho)  \frac{ x^{k_1+k_2-1}}{\Gamma(k_1+k_2)},
\end{align*}
where exchanging sums and integral is permitted by Lemma~\ref{lem:boundf_k}. By induction, we obtain
$$
\psi^{\ast (n)}(x) = e^{-x} \sum_{k_1=1}^{\infty}\ldots  \sum_{k_{n}=1}^{\infty} 2^{k_1+\ldots+k_{n} +n} \left(\prod_{j=1}^{n}  \kappa_{k_j}(\rho)\right) \cdot   \frac{ x^{k_1+\ldots+k_{n}-1}}{\Gamma(k_1+\ldots+k_{n}) } 
$$
and so
\begin{align*}
 &\int_0^{\infty}  x^{n-1}\psi_\rho^{\ast(n)}(x) \dd x 
 \\
 & = \sum_{k_1=1}^{\infty}\ldots  \sum_{k_{n}=1}^{\infty}  \left(\prod_{j=1}^{n} 2^{k_j+1} \kappa_{k_j}(\rho)\right)  \cdot  \frac{ \Gamma(k_1+\ldots+k_{n}+n -1)}{\Gamma(k_1+\ldots+k_{n}) }. 
\qquad \qedhere{}
 \end{align*}

\end{proof}

Together with the bound in Lemma~\ref{lem:boundf_k}, it is now possible to obtain a bound for the radius of convergence -- despite the fact that we cannot determine the $\kappa_n(\rho)$ more explicitly.

\begin{corollary}
    \label{cor:radius}
    The representation in (\ref{eqn:newrepresentationA}) converges absolutely for $|\rho|<0.332$.
\end{corollary}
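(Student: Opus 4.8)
The plan is to bound the general term of (\ref{eqn:newrepresentationA}) in absolute value so that, after summation over $k_1,\dots,k_n$, everything collapses into a single geometric-type series whose convergence is governed by an explicit numerical threshold. First I would pass to absolute values throughout (\ref{eqn:newrepresentationA}); since all the resulting bounds are non-negative, Tonelli's theorem legitimizes interchanging and factoring the multiple sums. To each factor I apply Lemma~\ref{lem:boundf_k} with $|\rho|$ in place of $\rho$, which gives $2^{k}|\kappa_k(\rho)|\le |\rho|^{k(k+1)/2}\big/\big(2\pi^{k/2}\Gamma(k/2+1)\big)$. The only obstruction to a clean factorization is the ratio $\Gamma(K+n-1)/\Gamma(K)$ with $K:=k_1+\dots+k_n$.

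I would handle this ratio by writing it as $(n-1)!\binom{K+n-2}{n-1}$ and invoking the crude estimate $\binom{K+n-2}{n-1}\le 2^{K+n-2}$. Since $2^{K}=\prod_j 2^{k_j}$, this extra factor distributes one further $2^{k_j}$ onto each index, so the summand over $(k_1,\dots,k_n)$ becomes $\prod_{j=1}^n 4^{k_j}|\kappa_{k_j}(\rho)|$ times constants depending on $n$ only. With $A:=\sum_{k\ge 1}4^{k}|\kappa_k(\rho)|$ the multiple sum therefore factors as $A^n$, and the prefactor collects to $\frac{2^{n-1}}{n!}\cdot (n-1)!\,2^{n-2}=\frac{2^{2n-3}}{n}$. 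Hence the $n$-th term of the absolute series is dominated by $\frac{(4A)^n}{8n}$, so the whole series is bounded by $\tfrac12-\tfrac18\ln\!\big(1-4A\big)$, which is finite precisely when $4A<1$. A further application of Lemma~\ref{lem:boundf_k} yields $A\le \tilde A(|\rho|):=\sum_{k\ge1}\frac{2^{k-1}|\rho|^{k(k+1)/2}}{\pi^{k/2}\Gamma(k/2+1)}$, a power series with non-negative coefficients; thus $\tilde A$ is increasing in $|\rho|$ and it suffices to verify $4\tilde A(0.332)<1$.

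The remaining, and only genuinely computational, step is this numerical verification, which I would carry out by evaluating the first few terms of $\tilde A(0.332)$ explicitly and controlling the tail. For $k\ge 2$ one has $k(k+1)/2\ge k+1$ and $\Gamma(k/2+1)\ge 1$, so the tail $\sum_{k\ge 2}$ is dominated by the geometric series $\frac{|\rho|}{2}\sum_{k\ge 2}\big(2|\rho|/\sqrt{\pi}\big)^{k}$, which converges since $2\cdot 0.332/\sqrt{\pi}<1$ and stays below $\tfrac14-\tfrac{2|\rho|}{\pi}$ at $|\rho|=0.332$ (the $k=1$ contribution being exactly $\tfrac{2|\rho|}{\pi}$). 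This gives $\tilde A(0.332)<\tfrac14$, hence $4\tilde A<1$ strictly, and therefore absolute convergence for all $|\rho|\le 0.332$, in particular for $|\rho|<0.332$.

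The main obstacle here is not the analysis but keeping the $\Gamma$-ratio estimate sharp enough: the crude bound $\binom{K+n-2}{n-1}\le 2^{K+n-2}$ is exactly what fixes the attainable threshold, and a finer treatment (for instance, retaining the full Laplace-transform/convolution structure that produced (\ref{eqn:newrepresentationA}), rather than expanding $\Gamma(K+n-1)/\Gamma(K)$ binomially) would enlarge the radius. For the present purpose this is immaterial, since the stated bound $0.332$ already comfortably exceeds the Slepian parameter $\widehat{\rho}\simeq 0.3186$, which is the only value the application in Proposition~\ref{prop: Slepian} requires.
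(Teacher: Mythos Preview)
Your argument follows the paper's proof closely in its overall structure: you use the same binomial bound $\Gamma(K+n-1)/\Gamma(K)\le (n-1)!\,2^{K+n-2}$, collapse the multi-sum into $A^n$, and reduce everything to the threshold $4A<1$, exactly as the paper does in (\ref{eqn:con1})--(\ref{eqn:con3}). Your numerics for the tail are correct and indeed give $\tilde A(0.332)<\tfrac14$.

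There is, however, one substantive difference worth flagging. The paper does \emph{not} bound $|\kappa_k(\rho)|$ directly via Lemma~\ref{lem:boundf_k}; instead it expands each $\kappa_k(\rho)$ as a power series $\kappa_k(\rho)=\frac{\rho^{k(k+1)/2}}{(2\pi)^{(k+1)/2}}\sum_p \psi_p^{(k)}\rho^p$ and puts absolute values on the \emph{coefficients} $\psi_p^{(k)}$, cf.\ (\ref{eqn:beforebinexp}). For $k=1$ this gives $\frac{1}{\pi}\log\frac{1+\rho}{1-\rho}$ rather than your $\frac{2|\rho|}{\pi}$, and for $k\ge 2$ the paper uses the integral estimate (\ref{eqn:estmoreterms4}) in place of Lemma~\ref{lem:boundf_k}. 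The reason for this choice is that the paper actually needs the stronger conclusion for Lemma~\ref{lem:bothrepresenationsareidentical}: to rearrange (\ref{eqn:newrepresentationA}) into a single power series $\sum_n K_n\rho^n$ one must control the sum with absolute values placed on the $\rho$-coefficients, not merely on the values $\kappa_k(\rho)$. Your version proves absolute convergence of the iterated sum over $(n,k_1,\dots,k_n)$, which is what the corollary literally states, but does not immediately give the coefficient-level absolute convergence that the subsequent lemma relies on. If you want your argument to feed into Lemma~\ref{lem:bothrepresenationsareidentical} as in the paper, you would either have to replace Lemma~\ref{lem:boundf_k} by a coefficient-wise bound as the paper does, or add a short analyticity argument (normal convergence of entire functions of $\rho$) to pass from your conclusion to the power-series radius.
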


\begin{proof}
Observe that the coefficient $\kappa_k(\rho)$ admits a series expansion of the form 
$$
\kappa_k(\rho) =: \frac{\rho^{\frac{k(k+1)}{2}}}{(2\pi)^{\frac{k+1}{2}}} \sum_{n=0}^{\infty} \psi_n^{(k)}\rho^n.
$$
By putting absolute values everywhere, we thus obtain the following bound for the absolute value of the term in (\ref{eqn:newrepresentationA}):
\begin{equation} \label{eqn:beforebinexp}
\frac{1}{2}+\frac{1}{2}  \sum_{n=1}^{\infty}\frac{2^n}{n!} \sum_{k_1=1}^{\infty}\ldots  \sum_{k_n=1}^{\infty} \left(\prod_{j=1}^n   \frac{\rho^{\frac{k_j(k_j+1)}{2}} 2^{k_j}}{(2\pi)^{\frac{k_j+1}{2}}}  \sum_{p=0}^{\infty} |\psi_p^{(k_j)}|\,|\rho|^p\right)  \frac{ \Gamma(k_1+\ldots+k_n+n-1 )}{\Gamma(k_1+\ldots+k_n) }.
\end{equation}
For the remainder of this proof, we consider $\rho>0$ only to avoid the absolute value signs. To compute the multiple sum, we first bound the ratio of Gamma functions. Note that, by the binomial theorem, for $x\in\N$
\begin{align*} 
\frac{\Gamma(x+n-1)}{\Gamma(x)} & = \frac{(x+n-2)!}{(x-1)!} = (n-1)!\, \frac{(x+n-2)!}{(n-1)! (x-1)!} 
\\
& =(n-1)!\, \binom{x+n-2}{n-1} \leq (n-1)!\, 2^{x+n-2}.
\end{align*}

  
As a consequence, also using the Fubini-Tonelli theorem, we get the upper bound for the term in (\ref{eqn:beforebinexp}):
\begin{align}
&\frac{1}{2}+\frac{1}{2} \sum_{n=1}^{\infty} \frac{2^n}{n!}\sum_{k_1=1}^{\infty}\ldots  \sum_{k_n=1}^{\infty}  \left(\prod_{j=1}^n   \frac{\rho^{\frac{k_j(k_j+1)}{2}} 4^{k_j}}{(2\pi)^{\frac{k_j+1}{2}}}  \sum_{p=0}^{\infty} |\psi_p^{(k_j)}|\rho^p\right) (n-1)! \, 2^{n-2}
\notag 
\\
= &  \frac{1}{2}+\frac{1}{8} \sum_{n=1}^{\infty} \frac{4^n}{n} \left( \sum_{k=1}^{\infty}  \frac{\rho^{\frac{k(k+1)}{2}} 4^k}{(2\pi)^{\frac{k+1}{2}}}  \sum_{p=0}^{\infty} |\psi_p^{(k)}|\rho^p\right)^n.
\label{eqn:con1}
\end{align}
We thus arrive at the following sufficient condition for the sum in (\ref{eqn:con1}) to converge:
\begin{equation} \label{eqn:con3}
\sum_{k=1}^{\infty}  \frac{\rho^{\frac{k(k+1)}{2}} 4^k}{(2\pi)^{\frac{k+1}{2}}}  \sum_{p=0}^{\infty} |\psi_p^{(k)}|\rho^p< \frac{1}{4}. 
\end{equation}
We now study $\sum_{p=0}^{\infty} |\psi_p^{(k)}|\rho^p$. For $k=1$, we have due to (\ref{eqn:f_1explicit}):
$$
\kappa_1(\rho) 
= \frac{\text{arctan}(\rho)}{2\pi}
=\frac{1}{2\pi} \sum_{n=0}^{\infty}  (-1)^n \frac{\rho^{2n+1}}{2n+1};
$$
hence, the definition $\displaystyle \kappa_1(\rho) = \frac{\rho}{2\pi} \sum_{p=0}^{\infty} \psi_p^{(1)}\rho^p$ yields
$$
\sum_{n=0}^{\infty} |\psi_n^{(1)}|\rho^n = \sum_{n=0}^{\infty}   \frac{\rho^{2n}}{2n+1}  =\frac{1}{2\rho}\log\left(\frac{1+\rho}{1-\rho}\right).
$$
Turning to the terms for $k\geq 2$ in (\ref{eqn:con3}), similarly to the proof of Lemma~\ref{lem:boundf_k}, we have
\begin{align}
\sum_{n=0}^{\infty} |\psi_n^{(k)}|\rho^n
&\leq  \int_0^{\infty} \int_0^{s_{0}} \ldots \int_0^{s_{k-1}} \int_0^{s_k} \exp\left(\frac{1}{2}\sum_{i=1}^{k} (\rho^{i}s_i)^2\right) e^{-\frac{s_0^2}{2}}   \dd s_k\ldots \dd s_0
\notag \\
&\leq \int_0^{\infty}   \frac{s^{k}}{k!}\exp\left(\frac{1}{2}\sum_{i=1}^{k} (\rho^{i}s)^2\right) e^{-\frac{s^2}{2}}   \dd s
\notag \\
&\leq \int_0^{\infty}   \frac{s^{k}}{k!}\exp\left(-\frac{s^2}{2}\frac{1-2\rho^2}{1-\rho^2}\right)    \dd s
\notag \\
&=\frac{1}{2\cdot k!} \, \Gamma\left(\frac{k+1}{2}\right) 2^{\frac{k+1}{2}}\left(\frac{1-\rho^2}{1-2\rho^2}\right)^{\frac{k+1}{2}}. \label{eqn:estmoreterms4}
\end{align}
Therefore, (\ref{eqn:con3}) is implied by
$$
\frac{1}{\pi}\log\left(\frac{1+\rho}{1-\rho}\right)+    \sum_{k=2}^{\infty}  \frac{\rho^{\frac{k(k+1)}{2}} 4^k}{\pi^{\frac{k+1}{2}}}\frac{1}{2 \cdot k!}  \Gamma\left(\frac{k+1}{2}\right) \left(\frac{1-\rho^2}{1-2\rho^2}\right)^{\frac{k+1}{2}} < \frac{1}{4}. 
$$
One can check numerically that this is true at least for all $\rho<0.332$.
\end{proof}

Note that the bound in the last corollary can be improved by evaluating more terms in the sum (\ref{eqn:con3}) explicitly instead of using the estimate (\ref{eqn:estmoreterms4}).

\begin{lemma}  \label{lem:bothrepresenationsareidentical}
    The representation in (\ref{eqn:newrepresentationA}) holds if and only if the representation $\lambda_\rho=\sum_{i=0}^\infty K_i \rho^i$ holds, where the $(K_i)$ are as in Theorem~\ref{thm: connection to persistence problem MA}.
\end{lemma}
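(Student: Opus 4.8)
The plan is to show that both power series represent the same analytic function $\lambda_\rho$, after which the identity of the coefficient sequences follows from the uniqueness of the Taylor expansion of an analytic function at $\rho=0$.

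First I would argue that both expressions compute the \emph{same} quantity, namely the leading eigenvalue $\lambda_\rho$ of the eigenvalue equation (\ref{eqn:eigenvalueequation}). The series $\lambda_\rho=\sum_{i=0}^\infty K_i\rho^i$ is, by Theorem~\ref{thm: connection to persistence problem MA}, the power-series expansion of $r(T_\rho)$, the largest eigenvalue of $T_\rho$ on $\mathcal{H}_q$. On the other hand, the representation (\ref{eqn:newrepresentationA}) was derived in the preceding corollaries as the largest solution of the scalar equation (\ref{eqn:eigenvalueeqn2}), which in turn was obtained in Lemma~\ref{lem:differentrepresentation} as an exact reformulation of the \emph{same} eigenvalue equation (\ref{eqn:eigenvalueequation}) via the substitution $f(0):=1$ and the iterative elimination of the eigenfunction. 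The key point to nail down is therefore that the ``largest positive root'' of (\ref{eqn:eigenvalueeqn2}) coincides with $r(T_\rho)$; this requires checking that the normalization $f(0)=1$ is legitimate for the leading eigenfunction (its non-negativity from Theorem~\ref{thm: connection to persistence problem MA} and the reproducing-kernel positivity make $f(0)>0$ plausible) and that the root selected by the Lagrange--B\"urmann inversion, i.e.\ the unique solution with $\lambda\ge\frac12$, is exactly this leading eigenvalue rather than some spurious smaller root.

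Once both series are identified as the Taylor expansion at $0$ of one and the same function $\rho\mapsto\lambda_\rho$, the coefficients must agree term by term. Concretely, the series $\sum K_i\rho^i$ converges on $|\rho|<r_0$ (with $r_0>0$) by Theorem~\ref{thm: connection to persistence problem MA}, and the series (\ref{eqn:newrepresentationA}) converges absolutely on $|\rho|<0.332$ by Corollary~\ref{cor:radius}; on the overlap, which is a nonempty real interval containing $0$, both define real-analytic functions taking the common value $\lambda_\rho$. By the identity theorem for real-analytic functions (equivalently, by uniqueness of Taylor coefficients), the two expansions are identical, and so each representation holds precisely when the other does. This gives the ``if and only if'' in both directions, since the two are literally the same series.

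The main obstacle I expect is the bookkeeping in the first step: verifying that Lemma~\ref{lem:differentrepresentation} genuinely isolates $r(T_\rho)$ and not a different root of (\ref{eqn:eigenvalueeqn2}). The transformation in that lemma was carried out for $\rho\in[0,1]$ and used that \emph{the} eigenfunction is bounded to kill the remainder term; I would want to confirm that the eigenfunction normalized by $f(0)=1$ is the Krein--Rutman (non-negative, leading) eigenfunction, so that the scalar identity it satisfies is the one whose largest root is $r(T_\rho)$. Granting that, the analyticity and coefficient-matching argument is routine, and the lemma follows.
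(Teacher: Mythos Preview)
Your proposal is correct and follows essentially the same route as the paper: both representations are power series in $\rho$ for the same analytic function $\lambda_\rho$, so their coefficients coincide by uniqueness of Taylor expansions, and hence each converges exactly when the other does. The paper's proof is terser---it simply notes that the $\kappa_k(\rho)$ expand as series in $\rho$, so (\ref{eqn:newrepresentationA}) may be rearranged into a single power series in $\rho$, and then invokes coefficient uniqueness---whereas you spend more effort on the preliminary verification that the root extracted by Lagrange--B\"urmann really is $r(T_\rho)$; the paper treats this as already established by Lemma~\ref{lem:differentrepresentation} and the subsequent corollaries.
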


\begin{proof}
First observe that the $\kappa_k(\rho)$ can be written as a series in $\rho$. Now, within the radius of convergence, one may rearrange all sums in (\ref{eqn:newrepresentationA}). Therefore, the representation in (\ref{eqn:newrepresentationA}) may be re-written as a series in $\rho$. Since the representation $\lambda_\rho=\sum_{i=0}^\infty K_i \rho^i$ is analytic in a vicinity of $0$, the coefficients of the two series have to agree.
Therefore, the radius of convergence has to be identical. 
\end{proof}

We mention that a large portion of the results of this manuscript are part of the PhD thesis \cite{Kettnerdissertation}.

%

\end{document}